\providecommand{\U}[1]{\protect\rule{.1in}{.1in}}
\newtheorem{theorem}{Theorem}[section]
\newtheorem*{acknowledgement*}{Acknowledgements}
\newtheorem{lemma}[theorem]{Lemma}
\newtheorem{proposition}[theorem]{Proposition}
\newtheorem{remark}[theorem]{Remark}
\newtheorem{theoremA}{Theorem} % alphabetic numbering
\newtheorem{corollaryA}[theoremA]{Corollary}
\def\<{\left\langle}
\def\>{\right\rangle}
\newcommand{\matrice}{\begin{pmatrix}}
\newcommand{\ok}{\end{pmatrix}}
\newcommand{\dmatrice}{\begin{vmatrix}}
\newcommand{\dok}{\end{vmatrix}}
\newcommand{\real}[1]{{\bf R}^{#1}}
\def\<{\left\langle}
\def\>{\right\rangle}
\begin{document}
\title[Quantitative index bounds for translators via topology]{Quantitative index bounds for translators via topology}

\subjclass[2010]{53C42, 53C21}
\keywords{Translators, index estimates, genus, number of ends}

\author[Debora Impera]{Debora Impera}
\address[Debora Impera]{Dipartimento di Scienze Matematiche "Giuseppe Luigi Lagrange", Politecnico di Torino, Corso Duca degli Abruzzi, 24, Torino, Italy, I-10129}
\email{debora.impera@gmail.com}

\author[Michele Rimoldi]{Michele Rimoldi}
\address[Michele Rimoldi]{Dipartimento di Scienze Matematiche "Giuseppe Luigi Lagrange", Politecnico di Torino, Corso Duca degli Abruzzi, 24, Torino, Italy, I-10129}
\email{michele.rimoldi@gmail.com}

%\date{\today}

\begin{abstract}
We obtain a quantitative estimate on the generalised index of translators for the mean curvature flow with bounded norm of the second fundamental form. The estimate involves the dimension of the space of weighted square integrable $f$-harmonic $1$-forms. By the adaptation to the weighted setting of Li-Tam theory developed in previous works, this yields estimates in terms of the number of ends of the hypersurface when this is contained in a upper halfspace with respect to the translating direction. When there exists a point where all principal curvatures are distinct we estimate the nullity of the stability operator. This permits to obtain quantitative estimates on the stability index via the topology of translators with bounded norm of the second fundamental form which are either two-dimensional or (in higher dimension) have finite topological type and are contained in a upper halfspace. 
\end{abstract}

\maketitle
\tableofcontents

\section{Introduction and main results}
An isometrically immersed complete (orientable) hypersurface of the Euclidean space $x:\Sigma^m\to(\real{m+1}, \<\,,\,\>)$ is said to be a translator of the mean curvature flow if its mean curvature vector field satisfies the equation
\begin{equation}\label{T}
\mathbf{H}=\bar{V}^{\bot}
\end{equation}
for some parallel unit length vector field $\bar{V}$ in $\real{m+1}$, where $(\cdot)^{\bot}$ denotes the projection on the normal bundle of $\Sigma$.  The importance of translators comes from the fact that they generate translating solutions of the mean curvature flow and these, in turn, model the formation of Type II singularities when starting from an initial mean convex closed hypersurface. It is by now well-known that equation \eqref{T} turns out to be the Euler-Lagrange equation for the weighted volume functional 
\[
\ \mathrm{vol}_{f}(\Sigma)=\int_{\Sigma}e^{-f}d\mathrm{vol}_{\Sigma},
\]
when choosing $f=-\<x,\bar{V}\>$.

Given a translator and a compactly supported normal variation $u\nu$, the second variation formula for the weighted volume functional yields the quadratic form
\[
\ Q_{f}(u,u)=\int_{\Sigma}\left(|\nabla u|^2-|A|^{2}u^{2}\right)e^{-f}d\mathrm{vol}_{\Sigma}.
\]
The associated stability operator of $\Sigma$ is then given by
\[
\ L_{f}=\Delta_{f}-|A|^{2},
\]
where $\Delta_{f}=\Delta+\<\nabla f, \nabla\>$, and $\Delta\doteq-\mathrm{div}(\nabla)$. The $f$-index of $\Sigma$ is defined in terms of the generalized Morse index of $L_{f}$ on $\Sigma$. Namely, given a relatively compact domain $\Omega\Subset\Sigma$ we define
\[
\ \mathrm{Ind}^{L_{f}}(\Omega)=\sharp\left\{\mathrm{negative\,eigenvalues\,of\,}L_{f}\,\mathrm{on\,} C_{0}^{\infty}(\Omega)\right\}.
\]
The $f$-index of $\Sigma$ is then defined as
\[
\ \mathrm{Ind}_{f}(\Sigma)\doteq\mathrm{Ind}^{L_{f}}(\Sigma)=\sup_{\Omega\subset\subset\Sigma}\mathrm{Ind}^{L_{f}}(\Omega).
\]

We denote by $\mathcal{H}^{1}_{f}(\Sigma)$ the space of $f$-harmonic one-forms which are square integrable with respect to the weighted measure:
\[
\ \mathcal{H}^{1}_{f}(\Sigma)\doteq\left\{\omega\in\Lambda^{1}T^{*}\Sigma\,:\,d\omega=\delta_{f}\omega=0,\,\,\int_{\Sigma}|\omega|^{2}e^{-f}d\mathrm{vol}_{\Sigma}<+\infty\right\},
\]
where $\delta_{f}=\delta+i_{\nabla f}$ and $\delta$ is the usual codifferential. If $\mathrm{Ind}_{f}(\Sigma)<\infty$ we note that it is a consequence of \cite[Theorem 3]{IR_fMin} (see also the considerations before Corollary 1 in \cite{IR_fMin}) that $\mathrm{dim}\left(\mathcal{H}^{1}_{f}(\Sigma)\right)<\infty$.

In the previous work \cite{ImperaRimoldi_Transl} (see also Appendix \ref{AppA} for some corrections and comments about \cite{ImperaRimoldi_Transl}) we highlighted how the realm of weighted manifolds and $f$-minimal hypersurfaces can naturally give strong enough results about the topology at infinity (namely about the number of ends) for translators which are $f$-stable or have finite $f$-index. In particular one can prove that, if $m\geq 3$, $f$-stable translators have at most one end, and translators with finite $f$-index have finitely many ends, provided that they are contained in a upper halfspace with respect to the translating direction. However this latter result, which was proved through the adaptation to the weighted setting of Li-Tam theory, has only a qualitative nature.

Inspired by the recent work by C. Li in the setting of minimal hypersurfaces in the Euclidean space, \cite{ChaoLi}, in this paper we will combine the weighted Li-Tam theory discussed above and a technique pioneered by A. Savo and A. Ros (\cite{Savo}, \cite{Ros}) to prove the following quantitative result.

\begin{theoremA}\label{ThIndPlusNullEst}
Let $x:\Sigma^m\to\real{m+1}$ be a translator with $|A|\in L^{\infty}(\Sigma)$ and $\mathrm{Ind}_{f}(\Sigma)<+\infty$. Then
\begin{equation}\label{IndEst}
\mathrm{Ind}_{f}(\Sigma)+\mathrm{Null}_{f}(\Sigma)\geq\frac{2}{m(m+1)}\mathrm{dim}(\mathcal{H}^{1}_{f}(\Sigma)),
\end{equation}
where $\mathrm{Null}_{f}(\Sigma)$ is the dimension of the space of Jacobi functions which are square integrable with respect to the weighted measure. In particular, if $m\geq 3$ and $x(\Sigma)$ is contained in a upper halfspace with respect to the translating direction, namely 
\[
\ x(\Sigma)\subset\Pi_{\bar{V},a}=\left\{p\in\mathbb{R}^{m+1}:\,\langle\bar{V},p\rangle\geq a\right\}\]
for some $a\in\mathbb{R}$, then
\begin{equation}\label{IndEst2}
\mathrm{Ind}_{f}(\Sigma)+\mathrm{Null}_{f}(\Sigma)\geq\frac{2}{m(m+1)}(\sharp \{ends\}-1)
\end{equation}
\end{theoremA}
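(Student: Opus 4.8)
The plan is to mimic the Ros–Savo test-function technique: produce, from each weighted-harmonic $1$-form, a large-dimensional family of test functions for the stability operator $L_f$, and control how many of them can fail to detect instability.

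The plan is to adapt to the weighted setting the test--function method of A.~Savo and A.~Ros, in the form used by C.~Li in \cite{ChaoLi}, the crucial new geometric input being that for a translator the weighted (Bakry--Émery) Ricci tensor $\mathrm{Ric}_{f}=\mathrm{Ric}+\mathrm{Hess}\,f$ of $\Sigma$ equals $-A^{2}$, hence is nonpositive. I would first establish this: since $f=-\langle x,\bar V\rangle$ is the restriction of an affine function of $\mathbb{R}^{m+1}$, one has $\mathrm{Hess}_{\Sigma}f=-\langle\bar V,\nu\rangle\,A=-H A$ by \eqref{T}, and adding the Gauss equation $\mathrm{Ric}^{\Sigma}(X,X)=H\,A(X,X)-|AX|^{2}$ gives $\mathrm{Ric}_{f}(X,X)=-|AX|^{2}$ (so $\mathrm{tr}\,\mathrm{Ric}_{f}=-|A|^{2}$). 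The weighted Bochner--Weitzenböck formula $\Delta_{H,f}\omega=\nabla^{*}_{f}\nabla\omega+\mathrm{Ric}_{f}(\omega^{\sharp},\cdot)$, applied to an $f$-harmonic $1$-form $\omega$ with dual field $X=\omega^{\sharp}$ and integrated by parts against $e^{-f}d\mathrm{vol}_{\Sigma}$, then yields the balance identity
\[
\int_{\Sigma}|\nabla\omega|^{2}\,e^{-f}d\mathrm{vol}_{\Sigma}=-\int_{\Sigma}\mathrm{Ric}_{f}(X,X)\,e^{-f}d\mathrm{vol}_{\Sigma}=\int_{\Sigma}|AX|^{2}\,e^{-f}d\mathrm{vol}_{\Sigma};
\]
the integration by parts is legitimate because $\omega\in L^{2}_{f}$, $|A|\in L^{\infty}$, and $\dim\mathcal{H}^{1}_{f}(\Sigma)<\infty$, through a Karp--type cutoff argument of the kind already used in \cite{IR_fMin}, which at the same time produces $\nabla\omega\in L^{2}_{f}$.

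Next I would build the test functions. Viewing $X=\omega^{\sharp}$ as an $\mathbb{R}^{m+1}$-valued map along $\Sigma$ (tangent to $\Sigma$, hence $X\perp\nu$), for each $\beta\in\Lambda^{2}\mathbb{R}^{m+1}$ put $u_{\omega,\beta}:=\langle X\wedge\nu,\beta\rangle\in C^{\infty}(\Sigma)$. Using the Gauss and Weingarten formulas one computes, for an orthonormal basis $\{\beta_{\alpha}\}_{\alpha=1}^{\binom{m+1}{2}}$ of $\Lambda^{2}\mathbb{R}^{m+1}$,
\[
\sum_{\alpha}u_{\omega,\beta_{\alpha}}^{2}=|X\wedge\nu|^{2}=|\omega|^{2},\qquad \sum_{\alpha}|\nabla u_{\omega,\beta_{\alpha}}|^{2}=|\nabla\omega|^{2}+|A|^{2}|\omega|^{2}-|AX|^{2},
\]
the cross terms vanishing because $X\perp\nu$ and because $A$ takes values in the tangent bundle. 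Since $|A|\in L^{\infty}$ and $\omega,\nabla\omega\in L^{2}_{f}$, each $u_{\omega,\beta}$ lies in the weighted Sobolev space and, $\mathrm{Ind}_{f}(\Sigma)$ being finite, in the form domain of $Q_{f}$. Subtracting $|A|^{2}$ times the first identity from the second and invoking the balance identity, the $|\nabla\omega|^{2}$, $|AX|^{2}$ and $|A|^{2}|\omega|^{2}$ terms all collapse and one obtains the key cancellation
\[
\sum_{\alpha}Q_{f}\big(u_{\omega,\beta_{\alpha}},u_{\omega,\beta_{\alpha}}\big)=\int_{\Sigma}\big(|\nabla\omega|^{2}-|AX|^{2}\big)e^{-f}d\mathrm{vol}_{\Sigma}=0 .
\]

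Then comes the counting argument. Let $\mathcal{N}\subset L^{2}_{f}$ be the span of the $L^{2}_{f}$ eigenfunctions of $L_{f}$ with negative eigenvalue together with the $L^{2}_{f}$ Jacobi functions, so that $\dim\mathcal{N}=\mathrm{Ind}_{f}(\Sigma)+\mathrm{Null}_{f}(\Sigma)$, and let $V:=\mathcal{N}^{\perp}$ inside the form domain; a routine spectral argument (using $\mathrm{Ind}_{f}(\Sigma)<\infty$) shows $Q_{f}>0$ on $V\setminus\{0\}$. Define the linear map $\Psi:\mathcal{H}^{1}_{f}(\Sigma)\to\mathrm{Hom}\big(\Lambda^{2}\mathbb{R}^{m+1},\mathcal{N}\big)$ by $\Psi(\omega)(\beta)=P_{\mathcal{N}}(u_{\omega,\beta})$, with $P_{\mathcal{N}}$ the $L^{2}_{f}$-orthogonal projection. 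If $\Psi(\omega)=0$ then every $u_{\omega,\beta}$ lies in $V$, so $Q_{f}(u_{\omega,\beta_{\alpha}},u_{\omega,\beta_{\alpha}})\ge 0$ for all $\alpha$; as these sum to zero each vanishes, and strict positivity of $Q_{f}$ on $V$ forces $u_{\omega,\beta}\equiv 0$ for every $\beta$, i.e. $X\wedge\nu\equiv 0$; since $X\perp\nu$ this gives $X\equiv 0$ and hence $\omega=0$. Thus $\Psi$ is injective, so
\[
\dim\mathcal{H}^{1}_{f}(\Sigma)\le\dim\mathrm{Hom}\big(\Lambda^{2}\mathbb{R}^{m+1},\mathcal{N}\big)=\binom{m+1}{2}\big(\mathrm{Ind}_{f}(\Sigma)+\mathrm{Null}_{f}(\Sigma)\big),
\]
which is exactly \eqref{IndEst}. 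Finally, \eqref{IndEst2} follows by combining \eqref{IndEst} with the quantitative weighted Li--Tam estimate $\dim\mathcal{H}^{1}_{f}(\Sigma)\ge\sharp\{ends\}-1$, valid when $m\ge 3$ and $x(\Sigma)$ lies in an upper halfspace (so that each end is $f$-non-parabolic), which is the effective counterpart of the topology-at-infinity results of \cite{ImperaRimoldi_Transl} recalled in the introduction.

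The geometric identities ($\mathrm{Ric}_{f}=-A^{2}$ and the two sums over $\{\beta_{\alpha}\}$) and the resulting cancellation are elementary; the real work, and the step I expect to be delicate, is the analytic justification that the Bochner formula can be integrated with no boundary contribution — equivalently, that $\nabla\omega\in L^{2}_{f}$ and the integration by parts is valid — for which $|A|\in L^{\infty}$ and the finiteness of the $f$-index are precisely what is needed, via a careful cutoff/density argument in the weighted setting. A secondary but genuine point is to verify that each $u_{\omega,\beta}$ is an admissible ($Q_{f}$-form-domain) test function and that $Q_{f}$ is strictly positive on $V$, both standard once $\mathrm{Ind}_{f}(\Sigma)+\mathrm{Null}_{f}(\Sigma)<\infty$ is in hand through \cite[Theorem~3]{IR_fMin}.
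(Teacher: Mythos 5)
Your proposal is correct and follows the same overall Savo--Ros--Li strategy as the paper (the test functions $u_{\omega,\beta}=\langle\omega^{\sharp}\wedge\nu,\beta\rangle$ are, up to sign, exactly the paper's $g_{\omega,ij}=\langle\omega^{\sharp},X_{ij}\rangle$, and the cutoff arguments exploiting $|A|\in L^{\infty}$ and the variational statement of Proposition \ref{fPropChaoLi} play the same role), but its internal mechanics differ at two points in a genuine way. First, where the paper derives $\sum Q_{f}(g_{\omega,ij},g_{\omega,ij})=0$ from the pointwise identity \eqref{7} for $\Delta_{f}g_{\omega,ij}$ together with the pointwise cancellation $\sum_{i,j}g_{\omega,ij}v_{\omega,ij}=0$, you obtain it from the integrated weighted Bochner formula $\int_{\Sigma}|\nabla\omega|^{2}e^{-f}=\int_{\Sigma}|A\omega^{\sharp}|^{2}e^{-f}$ (using $\mathrm{Ric}_{f}=-\langle A\cdot,A\cdot\rangle$) combined with the two pointwise sum identities for $\sum_{\alpha}u_{\omega,\beta_{\alpha}}^{2}$ and $\sum_{\alpha}|\nabla u_{\omega,\beta_{\alpha}}|^{2}$; both routes need the same cutoff justification (your balance identity is the analogue of Lemma \ref{gW12}, and in fact your sum identities give the $W^{1,2}(\Sigma_f)$ membership of the test functions more directly). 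Second, and more substantially, your counting projects onto $\mathcal{N}=\mathrm{span}\{\varphi_{1},\dots,\varphi_{I}\}\oplus\{L^{2}_{f}\text{ Jacobi functions}\}$ and proves injectivity of $\Psi:\mathcal{H}^{1}_{f}(\Sigma)\to\mathrm{Hom}(\Lambda^{2}\real{m+1},\mathcal{N})$, which yields $\dim\mathcal{H}^{1}_{f}\le\binom{m+1}{2}(\mathrm{Ind}_{f}+\mathrm{Null}_{f})$ in one stroke and completely bypasses the paper's final lemma (the weighted analogue of Proposition 4.3 in \cite{ChaoLi}) counting linearly independent functions among the $g_{\omega,ij}$; the paper needs that lemma precisely because its map $F$ only uses the negative eigenfunctions and the nullity must then be estimated from below. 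Two small points to tighten: the assertion that $Q_{f}>0$ on $\mathcal{N}^{\perp}\setminus\{0\}$ is not a purely spectral fact (on a noncompact $\Sigma$ the essential spectrum of $L_{f}$ may reach $0$, so there is no gap); the correct justification is the variational argument of Proposition \ref{fPropChaoLi} — $Q_{f}(h,h)=0$ and $h\perp\varphi_{1},\dots,\varphi_{I}$ force $L_{f}h=0$, and then orthogonality to the $L^{2}_{f}$ nullity space gives $h=0$ — and you should state explicitly that one may assume $\mathrm{Null}_{f}(\Sigma)<\infty$ (otherwise \eqref{IndEst} is trivial) so that $\mathcal{N}$ is finite dimensional and the projection and dimension count make sense. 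The deduction of \eqref{IndEst2} from $\dim\mathcal{H}^{1}_{f}(\Sigma)\ge\sharp\{\text{ends}\}-1$ coincides with the paper's use of Remark \ref{Rem1.1}.
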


\begin{remark}\label{Rem1.1}
\rm{In \eqref{IndEst2} we are using the fact that, since in our assumptions every end of a translator is non $f$-parabolic, 
\[
\ \mathrm{dim}(\mathcal{H}^{1}_{f}(\Sigma))\geq \sharp\left\{\mathrm{ends}\right\}-1;
\]
see \cite{ImperaRimoldi_Transl} and Appendix \ref{AppA} for more details.}
\end{remark}

\begin{remark}
\rm{Note that, up to our knowledge, all known examples of translators which can be found in literature satisfy the condition $|A|\in L^{\infty}(\Sigma)$; besides classical examples see e.g. \cite{N1}, \cite{N2}, and \cite{DDN}.}
\end{remark}

When there exists a point where all principal curvatures are distinct it is actually possible to estimate the nullity of the stability operator. This permits to get the following index estimate.

\begin{theoremA}\label{ThIndEst}
Let $x:\Sigma^m\to\real{m+1}$ be a translator with $|A|\in L^{\infty}(\Sigma)$ and $\mathrm{Ind}_{f}(\Sigma)<+\infty$. If there exists a point $p$ on $\Sigma$ where all the principal curvatures are distinct, then
\begin{equation*}\label{IndEstDistCurv}
\mathrm{Ind}_{f}(\Sigma)\geq\frac{2}{m(m+1)}\left(\mathrm{dim}(\mathcal{H}^{1}_{f}(\Sigma))-2m+1\right).
\end{equation*}
In particular, if $m\geq 3$ and $x(\Sigma)$ is contained in a upper halfspace then
\begin{equation*}\label{IndEstDistCurv1}
\mathrm{Ind}_{f}(\Sigma)\geq\frac{2}{m(m+1)}(\sharp\{ends\}-2m).
\end{equation*}
\end{theoremA}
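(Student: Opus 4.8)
The plan is to re-run the proof of Theorem~\ref{ThIndPlusNullEst}, replacing the term $\mathrm{Null}_{f}(\Sigma)$ there by a sharp count of the $f$-harmonic $1$-forms actually responsible for it. Recall that that proof attaches, in the spirit of Savo and Ros, to each $\omega\in\mathcal{H}^{1}_{f}(\Sigma)$ a finite family of test functions $u_{\omega,1},\dots,u_{\omega,N}$, with $N=\frac{m(m+1)}{2}$, obtained by pairing $\omega$ against an $N$-dimensional space of geometric data built from the ambient space; these functions lie in the form domain of $Q_{f}$ (here one uses $|A|\in L^{\infty}(\Sigma)$), and they satisfy a Savo--Ros type identity
\[
\sum_{i=1}^{N}Q_{f}(u_{\omega,i},u_{\omega,i})\le 0,
\]
with equality precisely when $\omega$ lies in a distinguished subspace $\mathcal{K}\subseteq\mathcal{H}^{1}_{f}(\Sigma)$ of forms rigidly tied to the extrinsic geometry of $\Sigma$ --- roughly, those $\omega$ whose covariant derivative is pointwise a multiple of the shape operator, up to a correction dictated by $\mathbf{H}=\bar{V}^{\perp}$.

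The new ingredient is the estimate $\dim(\mathcal{K})\le 2m-1$ under the hypothesis that some $p\in\Sigma$ has all principal curvatures distinct. To prove it I would show that the defining constraint of $\mathcal{K}$, together with $d\omega=\delta_{f}\omega=0$, the Codazzi equations and $\mathbf{H}=\bar{V}^{\perp}$, forces $\omega$ --- coupled to the finitely many auxiliary scalar functions that appear in the constraint --- to solve a closed overdetermined first order system along $\Sigma$, so that the space of such $\omega$ is finite dimensional and its elements are determined by finitely many data at any fixed point. A direct computation at $p$, diagonalising $A(p)=\mathrm{diag}(\kappa_{1},\dots,\kappa_{m})$ and using that the $\kappa_{i}$ are pairwise distinct, then shows that the off-diagonal freedom collapses and the solution space has dimension at most $2m-1$; hence $\dim(\mathcal{K})\le 2m-1$.

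Granting this, suppose towards a contradiction that $\dim(\mathcal{H}^{1}_{f}(\Sigma))>\frac{m(m+1)}{2}\,\mathrm{Ind}_{f}(\Sigma)+(2m-1)$. Since $\mathrm{Ind}_{f}(\Sigma)<\infty$, let $E'\subset L^{2}_{f}(\Sigma)$ be the span of the eigenfunctions of $L_{f}$ with negative eigenvalue, so $\dim E'=\mathrm{Ind}_{f}(\Sigma)$ and $Q_{f}\ge 0$ on the weighted $L^{2}$-orthogonal complement of $E'$. The linear map $\omega\mapsto(\pi_{E'}u_{\omega,1},\dots,\pi_{E'}u_{\omega,N})\in(E')^{\oplus N}$ has kernel of dimension strictly bigger than $2m-1$; for any $\omega$ in this kernel every $u_{\omega,i}$ is $L^{2}_{f}$-orthogonal to $E'$, hence $Q_{f}(u_{\omega,i},u_{\omega,i})\ge 0$ for all $i$, whereas the Savo--Ros identity gives $\sum_{i}Q_{f}(u_{\omega,i},u_{\omega,i})\le 0$; thus each term vanishes, the identity holds with equality, and $\omega\in\mathcal{K}$. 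Then $\mathcal{K}$ contains a subspace of dimension $>2m-1$, contradicting the estimate above. Therefore $\dim(\mathcal{H}^{1}_{f}(\Sigma))\le\frac{m(m+1)}{2}\,\mathrm{Ind}_{f}(\Sigma)+(2m-1)$, which rearranges to the stated inequality. For the final assertion, if $m\ge 3$ and $x(\Sigma)\subset\Pi_{\bar{V},a}$ then Remark~\ref{Rem1.1} gives $\dim(\mathcal{H}^{1}_{f}(\Sigma))\ge\sharp\{\mathrm{ends}\}-1$, and substituting yields $\mathrm{Ind}_{f}(\Sigma)\ge\frac{2}{m(m+1)}(\sharp\{\mathrm{ends}\}-2m)$.

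The main obstacle is the bound $\dim(\mathcal{K})\le 2m-1$: one must identify the equality locus of the Savo--Ros identity precisely enough, then use $\mathbf{H}=\bar{V}^{\perp}$ to see that the constraint defining $\mathcal{K}$ is genuinely overdetermined rather than underdetermined, and finally carry out the linear algebra at $p$ to check that simplicity of the spectrum of $A(p)$ brings the solution space down to dimension exactly $2m-1$, rather than to the larger a priori bound $m+1+\binom{m}{2}$ coming from the translations and the rotations fixing $\bar{V}$. Everything else is a re-run of the proof of Theorem~\ref{ThIndPlusNullEst}.
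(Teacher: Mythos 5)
Your outer argument is the same as the paper's: re-run the proof of Theorem \ref{ThIndPlusNullEst}, observe that every $\omega$ in the kernel of the relevant linear map produces test functions on which $Q_f$ vanishes, and bound the dimension of the resulting ``equality locus'' by $2m-1$. That reduction, and the final substitution via Remark \ref{Rem1.1}, are fine. The genuine gap is that the two steps which constitute the actual content of the theorem are only announced, not proved --- you yourself label them ``the main obstacle''. First, the identification of the equality locus is left vague, and your tentative description (``$\nabla\omega$ pointwise a multiple of the shape operator'') is not the right condition. In the paper this step is concrete: for $\omega$ in the kernel, $Q_f(g_{\omega,ij},g_{\omega,ij})=0$, Lemma \ref{gW12} gives $g_{\omega,ij}\in W^{1,2}(\Sigma_f)$, Proposition \ref{fPropChaoLi} upgrades this to $L_f g_{\omega,ij}=0$, and then equation \eqref{7} with $\Delta_f^{[1]}\omega=0$ forces $v_{\omega,ij}=0$ for all $i,j$, i.e.\ $\nabla\omega(AX,Y)=\nabla\omega(AY,X)$ for all $X,Y$ (the $E_i$ span $T\Sigma$). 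Since $d\omega=0$ makes $\nabla\omega$ symmetric, this is a \emph{commutation} condition between $\nabla\omega^{\sharp}$ and $A$, not proportionality; the distinction is essential, because proportionality would give a much smaller space, whereas commutation at a point with simple principal curvatures only forces $\nabla\omega$ to be diagonal, leaving $m$ free diagonal entries, and it is exactly this count that produces $2m-1$.

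Second, the bound $\dim\mathcal{W}\le 2m-1$ is asserted (``I would show\dots'', ``a direct computation at $p$ \dots shows'') but never carried out. The paper's proof, in Section \ref{EstNull}, runs: near $p$ take principal directions $e_i$ with distinct $\lambda_i$; the commutation condition kills $\nabla\omega(e_i,e_j)$ for $i\neq j$; the constraint $\delta_f\omega=0$ expresses $\nabla\omega(e_m,e_m)$ in terms of $\nabla\omega(e_i,e_i)$, $i\le m-1$, and the $\omega(e_i)$; hence $\nabla\omega$ is determined on a neighbourhood of $p$ by the $2m-1$ functions $\omega(e_1),\dots,\omega(e_m),\nabla\omega(e_1,e_1),\dots,\nabla\omega(e_{m-1},e_{m-1})$, which satisfy a closed first-order system (adapting \cite[Proposition 5]{AmbrozioCarlottoSharp_PAMS}), so that their values at $p$ determine $\omega$ locally, and unique continuation globalises the bound. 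Your sketch gestures at this (``overdetermined first order system'', ``linear algebra at $p$''), and your worry about the a priori count $m+1+\binom{m}{2}$ is beside the point --- the $2m-1$ comes from the $m$ components $\omega(e_i)$ plus the $m-1$ independent diagonal entries of $\nabla\omega$ after the $\delta_f$-trace relation, not from counting ambient translations and rotations. As written, the proposal reduces the theorem to an unproved claim about $\dim\mathcal{K}$; supplying the characterisation via \eqref{7} and the Ambrozio--Carlotto--Sharp-type ODE argument is precisely what is missing.
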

Note that, in dimension $2$, either $\Sigma$ is totally umbilical or it admits a point where all the principal curvatures are distinct. Since the only totally umbilical translators are the planes containing the translating direction $\bar{E}_3$ (which are $f$-stable), in the two-dimensional case Theorem \ref{ThIndEst} gives the following
\begin{corollaryA}\label{CoroIndEst2D}
Let $x:\Sigma^2\to\real{3}$ be a translator with $|A|\in L^{\infty}(\Sigma)$ and $0\neq\mathrm{Ind}_{f}(\Sigma)<+\infty$. Then 
\begin{equation*}\label{IndEstDistCurv2D}
\mathrm{Ind}_{f}(\Sigma)\geq\frac{1}{3}\left(\mathrm{dim}(\mathcal{H}^{1}_{f}(\Sigma))-3\right).
\end{equation*}
\end{corollaryA}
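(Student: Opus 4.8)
The plan is to derive the corollary directly from Theorem~\ref{ThIndEst} once the totally umbilical case has been disposed of. I would start from the elementary dichotomy for surfaces: at every point $p\in\Sigma$ the shape operator $A_{p}$ has exactly two eigenvalues $\kappa_{1}(p),\kappa_{2}(p)$, so either there exists a point where $\kappa_{1}(p)\neq\kappa_{2}(p)$ --- which for $m=2$ is precisely a point where all principal curvatures are distinct --- or $\kappa_{1}\equiv\kappa_{2}$ on all of $\Sigma$, in which case $\Sigma$ is totally umbilical.

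The next step is to rule out the totally umbilical alternative using the hypothesis $\mathrm{Ind}_{f}(\Sigma)\neq 0$. A connected complete totally umbilical hypersurface of $\real{3}$ is a round sphere or an affine plane. A round sphere cannot be a translator: at any point where the unit normal $\nu$ is orthogonal to $\bar{V}$, equation~\eqref{T} forces $\mathbf{H}=\bar{V}^{\bot}=0$, whereas the mean curvature of a sphere vanishes nowhere. If $\Sigma$ is an affine plane, then $\mathbf{H}\equiv 0$, so \eqref{T} gives $\bar{V}^{\bot}\equiv 0$, i.e.\ $\bar{V}$ is tangent to $\Sigma$ and hence $\Sigma$ is a plane containing the translating direction; such a plane has $A\equiv 0$, whence $Q_{f}(u,u)=\int_{\Sigma}|\nabla u|^{2}e^{-f}d\mathrm{vol}_{\Sigma}\geq 0$ for every $u\in C_{0}^{\infty}(\Sigma)$, so that $\mathrm{Ind}^{L_{f}}(\Omega)=0$ for all $\Omega\Subset\Sigma$ and therefore $\mathrm{Ind}_{f}(\Sigma)=0$, contradicting the hypothesis. (Equivalently, one may simply quote the classification of totally umbilical translators and their $f$-stability recalled immediately before the statement.)

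Having excluded umbilicity, there is a point $p\in\Sigma$ at which all the principal curvatures are distinct, and since moreover $|A|\in L^{\infty}(\Sigma)$ and $\mathrm{Ind}_{f}(\Sigma)<+\infty$, Theorem~\ref{ThIndEst} applies without further ado. Specialising its inequality to $m=2$ --- so that $\tfrac{2}{m(m+1)}=\tfrac13$ and $2m-1=3$ --- yields $\mathrm{Ind}_{f}(\Sigma)\geq\tfrac13\bigl(\mathrm{dim}(\mathcal{H}^{1}_{f}(\Sigma))-3\bigr)$, which is exactly the asserted bound. In this reduction there is no genuine obstacle beyond Theorem~\ref{ThIndEst} itself (which in turn combines Theorem~\ref{ThIndPlusNullEst} with the bound $\mathrm{Null}_{f}(\Sigma)\leq 2m-1$ available when a point with distinct principal curvatures exists); the only step here requiring care is the verification that the discarded configurations are precisely the $f$-stable planes, so that assuming $\mathrm{Ind}_{f}(\Sigma)\neq 0$ is enough to place us in the hypotheses of Theorem~\ref{ThIndEst}.
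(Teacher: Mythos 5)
Your argument is correct and coincides with the paper's own reasoning, which is contained in the sentence preceding the corollary: in dimension $2$ either $\Sigma$ has a point with distinct principal curvatures or it is totally umbilical, and the totally umbilical translators are exactly the planes containing the translating direction, which are $f$-stable and hence excluded by $\mathrm{Ind}_{f}(\Sigma)\neq 0$, so Theorem~\ref{ThIndEst} with $m=2$ gives the bound. Your additional verification of the umbilical classification (ruling out spheres via the translator equation and checking $f$-stability of planes with $A\equiv 0$) is a correct expansion of the same step.
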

Adapting a construction by H. M. Farkas and I. Kra, \cite{FK}, in \cite[Theorem D]{IRS_IndexBettiSS} it is proved that for any $2$-dimensional orientable connected complete surface $\Sigma$, and any $f\in C^{\infty}(\Sigma)$, 
\[
\ \mathrm{dim}\mathcal{H}^{1}_{f}(\Sigma)\geq 2g,
\]
where $g$ is the genus of $\Sigma$. Using Corollary \ref{CoroIndEst2D}, this in particular yields the following effective estimate in the two-dimensional case.

\begin{theoremA}\label{Genus2D}
Let $x:\Sigma^2\to\real{3}$ be a translator with $|A|\in L^{\infty}(\Sigma)$ and $\mathrm{Ind}_{f}(\Sigma)<\infty$. Then
\[
\ \mathrm{Ind}_{f}(\Sigma)\geq \frac{2}{3}g-1.
\]
\end{theoremA}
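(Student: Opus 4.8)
The plan is to derive Theorem~\ref{Genus2D} by feeding the genus lower bound $\dim\mathcal{H}^1_f(\Sigma)\ge 2g$ from \cite[Theorem D]{IRS_IndexBettiSS} into Corollary~\ref{CoroIndEst2D}, after first disposing of the degenerate cases that are excluded from the hypotheses of the corollary. Concretely, I would split into two cases according to whether $\mathrm{Ind}_f(\Sigma)=0$ or $\mathrm{Ind}_f(\Sigma)\neq 0$.

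\emph{Case 1: $\mathrm{Ind}_f(\Sigma)\neq 0$.} Then $0\neq\mathrm{Ind}_f(\Sigma)<\infty$, so Corollary~\ref{CoroIndEst2D} applies directly and gives
\[
\mathrm{Ind}_f(\Sigma)\geq\frac{1}{3}\bigl(\dim\mathcal{H}^1_f(\Sigma)-3\bigr)\geq\frac{1}{3}(2g-3)=\frac{2}{3}g-1,
\]
where the middle inequality is \cite[Theorem D]{IRS_IndexBettiSS}. This is exactly the claimed bound.

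\emph{Case 2: $\mathrm{Ind}_f(\Sigma)=0$.} Here I must show $g=0$, so that the asserted inequality $0\geq\frac{2}{3}g-1=-1$ holds (indeed it holds trivially with room to spare). A $2$-dimensional $f$-stable translator has, by the discussion recalled in the excerpt (the $m\ge 3$ statement does not apply, but the relevant structural fact is that $f$-stability is very restrictive), at most the topology of a plane; more to the point, one can argue as follows. If $\mathrm{Ind}_f(\Sigma)=0$ then by Corollary~\ref{CoroIndEst2D}'s proof mechanism — or simply because an $f$-stable surface cannot carry the Jacobi fields forced by positive genus — we get $\dim\mathcal{H}^1_f(\Sigma)\le 3$, hence $2g\le 3$, hence $g\le 1$; and a more careful look (a genus-one surface would force $\dim\mathcal{H}^1_f(\Sigma)\ge 2$ together with the ``$-3$'' slack, which is still consistent, so one really needs the classification of stable translators) rules out $g=1$. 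Alternatively, and more cleanly: in dimension $2$ an $f$-stable translator is totally geodesic (this is part of what is recalled before Corollary~\ref{CoroIndEst2D}, where it is noted the only totally umbilic translators are the planes through $\bar E_3$, which are the $f$-stable ones), hence a plane, hence $g=0$, and $0\ge\frac{2}{3}\cdot 0-1$ holds.

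The main obstacle is the bookkeeping in Case~2: Corollary~\ref{CoroIndEst2D} explicitly requires $\mathrm{Ind}_f(\Sigma)\neq 0$, so one cannot simply quote it, and one must instead invoke the classification of $f$-stable translators (planes containing the translating direction) to conclude $g=0$ in that case — a fact that is stated in the text just above Corollary~\ref{CoroIndEst2D}. Once both cases are handled, the two bounds combine to give $\mathrm{Ind}_f(\Sigma)\geq\frac{2}{3}g-1$ in all cases, completing the proof.
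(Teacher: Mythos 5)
Your Case 1 is exactly the paper's deduction: plug the bound $\dim\mathcal{H}^1_f(\Sigma)\geq 2g$ from \cite[Theorem D]{IRS_IndexBettiSS} into Corollary \ref{CoroIndEst2D}. The gap is in Case 2. First, you are overshooting the goal there: when $\mathrm{Ind}_f(\Sigma)=0$ the claimed inequality reads $0\geq\frac{2}{3}g-1$, i.e.\ $g\leq 1$, so you do not need $g=0$. More seriously, your ``cleaner'' route rests on a misreading of the text: the sentence before Corollary \ref{CoroIndEst2D} says that the only \emph{totally umbilical} translators are the planes containing $\bar{E}_3$, and that those planes are $f$-stable; it does not say that every $f$-stable translator is totally umbilical, and that converse is false (graphical translators such as the bowl soliton and the grim reaper cylinders are $f$-stable and are not planes). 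The paper's own remark after Theorem \ref{Genus2D} makes this plain: from this theorem it only extracts ``genus at most one'' for stable translators and credits the genus-zero statement to \cite{KS} via a genuinely different argument --- which would be vacuous if stability forced the surface to be a plane. Your other suggestion in Case 2 (``a stable surface cannot carry the Jacobi fields forced by positive genus'') is an assertion, not a proof.

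The correct way to close Case 2 is the same dichotomy that yields Corollary \ref{CoroIndEst2D}: either $\Sigma$ is totally umbilical, hence a plane containing $\bar{E}_3$, hence $g=0$ and the inequality is trivial; or $\Sigma$ admits a point where the two principal curvatures are distinct, and then Theorem \ref{ThIndEst} applies --- its hypotheses require only $|A|\in L^{\infty}(\Sigma)$ and $\mathrm{Ind}_f(\Sigma)<\infty$, not $\mathrm{Ind}_f(\Sigma)\neq 0$ --- giving
\[
0=\mathrm{Ind}_f(\Sigma)\geq\frac{1}{3}\left(\dim\mathcal{H}^1_f(\Sigma)-3\right)\geq\frac{1}{3}(2g-3)=\frac{2}{3}g-1.
\]
With that replacement your two-case scheme is sound and coincides with the paper's (implicit) argument; as written, however, Case 2 relies on a classification of $f$-stable translators that is neither in the paper nor true.
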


\begin{remark}
\rm{As a consequence of Theorem \ref{Genus2D} we can conclude that any $f$-stable translator with $|A|\in L^{\infty}$ has at most genus $1$. We remark that this last fact was indeed independently improved in the very recent preprint \cite{KS} (which appeared on the Arxiv preprint server while we were reviewing a final version of this work). In that paper it is actually obtained that every $f$-stable translator has genus $0$. The proof in \cite{KS} relies on a general proposition due to M. Gaffney, \cite{Gaf}, as well as on an adaptation of a computation in \cite{P}. Note however that the result in \cite{KS} only concerns $f$-stable translators, while the main focus in our Theorem \ref{Genus2D} is to relate quantitatively the $f$-index and the genus.
}
\end{remark}

As we shall see in Section \ref{SecRelfHarmTop} in presence of a weighted $L^{2}$-Sobolev inequality, one can estimate the dimension of $\mathcal{H}^{1}_{f}$ by means of the dimension of the first cohomology group with compact support on $\Sigma$. By Lemma \ref{4.2} in Appendix \ref{AppA} and Theorem \ref{ThIndPlusNullEst} this result permits to obtain the following estimate when the translator has finite topological type.

\begin{theoremA}\label{ThFiniteTop}
Let $x:\Sigma^{m\geq 3}\to\real{m+1}$ be a translator with finite topological type and $|A|\in L^{\infty}(\Sigma)$. Assume that $x(\Sigma)$ is contained in a upper halfspace. Then $\Sigma$ is diffeomomorphic to a compact hypersurface $\bar{\Sigma}\to\real{m+1}$ with a finite number of points removed $\left\{p_{1}, \,\ldots,\,p_{r}\right\}$, and
\[
\ \mathrm{Ind}_{f}(\Sigma)+\mathrm{Null}_{f}(\Sigma)\geq \frac{2}{m(m+1)}\left(b_{1}(\bar{\Sigma})+r-1\right),
\]
where $b_{1}(\bar{\Sigma})$ is the first Betti number of the compactification $\bar{\Sigma}$ of $\Sigma$.
\end{theoremA}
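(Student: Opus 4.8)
The plan is to assemble the statement from three already-established facts: the topological normal form for $\Sigma$ furnished by Lemma~\ref{4.2}, the comparison $\dim\mathcal{H}^1_f(\Sigma)\ge\dim H^1_c(\Sigma;\R)$ valid in presence of a weighted $L^2$-Sobolev inequality (Section~\ref{SecRelfHarmTop}), and the index--nullity estimate \eqref{IndEst} of Theorem~\ref{ThIndPlusNullEst}; the one genuinely new ingredient is a homological count of $\dim H^1_c(\Sigma;\R)$ in terms of $b_1(\bar\Sigma)$ and $r$.

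First I would invoke Lemma~\ref{4.2}. Under the standing hypotheses---$\Sigma$ of finite topological type, $|A|\in L^\infty(\Sigma)$, $x(\Sigma)$ contained in an upper halfspace---it yields that $\Sigma$ is diffeomorphic to a closed, connected, orientable smooth hypersurface $\bar\Sigma$ with $r\ge 1$ points $p_1,\dots,p_r$ removed, every end being non-$f$-parabolic; combined with $|A|\in L^\infty$ this also gives $\mathrm{Ind}_f(\Sigma)<\infty$ (hence $\dim\mathcal{H}^1_f(\Sigma)<\infty$ by \cite[Theorem 3]{IR_fMin}) and a weighted $L^2$-Sobolev inequality on $\Sigma$. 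Granting the Sobolev inequality, the result of Section~\ref{SecRelfHarmTop} applies and gives
\begin{equation*}
\dim\mathcal{H}^1_f(\Sigma)\ \ge\ \dim H^1_c(\Sigma;\R),
\end{equation*}
the mechanism being that each compactly supported closed $1$-form then has a weighted $L^2$ $f$-harmonic representative, with linearly independent classes producing linearly independent harmonic forms.

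Second, I would compute $\dim H^1_c(\Sigma;\R)$ by elementary topology. Realize $\Sigma\cong\bar\Sigma\setminus\{p_1,\dots,p_r\}$ as the interior of the compact manifold with boundary $N:=\bar\Sigma\setminus\bigsqcup_{i=1}^{r}B_i$, where the $B_i$ are disjoint open coordinate balls about the $p_i$, so that $\partial N=\bigsqcup_{i=1}^{r}S^{m-1}$ and (since $\partial N$ is collared) $H^1_c(\Sigma;\R)\cong H^1(N,\partial N;\R)$. Since $m\ge 3$, the long exact sequence of the pair $(N,\partial N)$ degenerates to
\begin{equation*}
0\longrightarrow H^0(N;\R)\longrightarrow H^0(\partial N;\R)\longrightarrow H^1(N,\partial N;\R)\longrightarrow H^1(N;\R)\longrightarrow 0,
\end{equation*}
because $H^0(N,\partial N;\R)=0$ ($N$ is connected and $\partial N\ne\emptyset$) and $H^1(\partial N;\R)=0$ (as $m-1\ge 2$). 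Using $\dim H^0(N;\R)=1$, $\dim H^0(\partial N;\R)=r$, and $H^1(N;\R)\cong H^1(\bar\Sigma;\R)$ of dimension $b_1(\bar\Sigma)$ (removing balls from a manifold of dimension $\ge 3$ does not affect $H^1$), exactness forces
\begin{equation*}
\dim H^1_c(\Sigma;\R)=b_1(\bar\Sigma)+r-1 .
\end{equation*}

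Putting the displays together yields $\mathrm{Ind}_f(\Sigma)+\mathrm{Null}_f(\Sigma)\ge\tfrac{2}{m(m+1)}\dim\mathcal{H}^1_f(\Sigma)\ge\tfrac{2}{m(m+1)}\bigl(b_1(\bar\Sigma)+r-1\bigr)$, as claimed. I expect the main obstacle to lie not in the homological bookkeeping but in checking that the geometric hypotheses simultaneously activate all three inputs: that finite topological type together with $|A|\in L^\infty$ inside an upper halfspace really produces the point-compactification $\bar\Sigma$---equivalently, that each end has spherical link and can be capped by a ball---and also yields finiteness of the $f$-index and the weighted $L^2$-Sobolev inequality. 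This is precisely what Lemma~\ref{4.2} and the discussion preceding it are meant to supply, so that lemma is the crux; once it is in hand, the argument above is pure assembly.
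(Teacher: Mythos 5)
Your overall route is the paper's own: the estimate is obtained by chaining Lemma \ref{4.2} (the weighted Sobolev inequality on translators in an upper halfspace), the Proposition of Section \ref{SecRelfHarmTop} giving $\dim\mathcal{H}^{1}_{f}(\Sigma)\geq\dim H^{1}_{c}(\Sigma)$, the topological count $\dim H^{1}_{c}(\Sigma)=b_{1}(\bar{\Sigma})+r-1$ (your long exact sequence of the pair $(N,\partial N)$ with spherical boundary links, using $m\geq 3$, is exactly the computation in the paper's remark), and Theorem \ref{ThIndPlusNullEst}. So the assembly itself is sound and coincides with the paper's proof.

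One correction, though: you load onto Lemma \ref{4.2} more than it delivers. That lemma supplies only the weighted $L^{1}$- (hence $L^{2}$-) Sobolev inequality. It does not produce the point-compactification: in this paper ``finite topological type'' means by definition that $\Sigma$ is diffeomorphic to $\bar{\Sigma}\setminus\{p_{1},\dots,p_{r}\}$, so the first assertion of Theorem \ref{ThFiniteTop} is the hypothesis restated, not a geometric consequence of the lemma (if one started from the weaker notion ``interior of a compact manifold with boundary'', recognizing each end as a punctured ball would be an extra step that neither the lemma nor the paper provides). More importantly, neither Lemma \ref{4.2} nor anything else in the paper yields $\mathrm{Ind}_{f}(\Sigma)<+\infty$ from finite topological type, $|A|\in L^{\infty}(\Sigma)$ and the halfspace condition; that implication is unjustified and your argument should not lean on it. Fortunately it is also unnecessary: if $\mathrm{Ind}_{f}(\Sigma)=+\infty$ the claimed inequality is trivially true (its right-hand side is finite), while if $\mathrm{Ind}_{f}(\Sigma)<+\infty$ Theorem \ref{ThIndPlusNullEst} applies exactly as you use it. With that case distinction made explicit, and the finite-index claim removed, your proof is correct and is essentially the paper's.
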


The paper is organized as follows. In Section \ref{SecBasEq} we collect some basic equations that we shall use in the proofs of our results. Section \ref{SecIndPlusNull} is devoted to the proof of the index plus nullity estimate, i.e. Theorem \ref{ThIndPlusNullEst}. Section \ref{EstNull} concerns with the case in which the hypersurface admits a point where all principal curvature are distinct, and hence take care of the estimate of the nullity of the stability operator which yields Theorem \ref{ThIndEst} and Corollary \ref{CoroIndEst2D}. We end up the paper with Section \ref{SecRelfHarmTop} where we discuss the relation between $f$-harmonic $1$- forms and topology on a weighted manifold admitting a weighted $L^{2}$-Sobolev inequality and we eventually deduce the validity of Theorem \ref{ThFiniteTop}. In Appendix \ref{AppA} we provide a correct proof of Lemma 4.2 in \cite{ImperaRimoldi_Transl} and make some comments about \cite{ImperaRimoldi_Transl}.
%%%%%%%%%%%%%%%%%%%%%%%%%%%%%%%%%%%%%%%%%%%%%%%%
\section{Basic equations}\label{SecBasEq}
Let $x:\Sigma^m\to\real{m+1}$ be a translator of the mean curvature flow. Letting $\left\{\bar{E}_{1},\ldots,\bar{E}_{m+1}\right\}$ be the standard orthonormal basis of $\real{m+1}$, we assume from now on, without loss of generality, that $\bar{V}=\bar{E}_{m+1}$ in \eqref{T}. Furthermore we will set $\eta_{i}=E_{i}^{\flat}$, $E_{i}$ being the projections of $\bar{E}_{i}$ on $\Sigma$.

\begin{lemma}\label{LemBasEq}
Set $f=-\<x,\bar{E}_{m+1}\>$ and denote by $\nu$ the unit normal vector to $\Sigma$ and by $A$ the second fundamental form of the immersion. Let $\omega\in\Lambda^{1}T^{*}\Sigma$, $\xi=\omega^{\sharp}$ and set $X_{ij}\doteq\<\bar{E}_{i},\nu\>E_{j}-\<\bar{E}_{j},\nu\>E_{i}$. Then
\begin{align}
\nabla_{\xi}E_{i}=&\<\bar{E}_{i}, \nu\>A\xi;\label{1}\\
\nabla\<\bar{E}_{i}, \nu\>=&-AE_{i};\label{2}\\
\Delta_{f}\<\bar{E_{i}},\nu\>=&\<\bar{E}_{i}, \nu\>|A|^2;\label{3}\\
\Delta_{f}\<x,\bar{E}_{i}\>=&-\<\bar{E}_{i},\bar{E}_{m+1}\>;\label{4}\\
\Delta_{f}\<E_{i},\xi\>=&2\<AE_{i},A\xi\>+\<\Delta_{f}^{[1]}\omega,\eta_{i}\>\label{5}
\\&-2\<\bar{E}_{i},\nu\>\sum_{k}\<Ae_{k},\nabla\xi(e_{k})\>;\nonumber\\
\Delta_{f}\left(\<\bar{E}_{j},\nu\>\<E_{i},\xi\>\right)=&|A|^2\<\bar{E}_{i},\nu\>\<E_{i},\xi\>+2\<\nabla\<E_{i},\xi\>,AE_{j}\>\label{6}
\\&+2\<AE_{i},A\xi\>\<\bar{E}_{j},\nu\>
+\<\Delta_{f}^{[1]}\omega,\eta_{i}\>\<\bar{E}_{j},\nu\>\nonumber
\\&-2\<\bar{E}_{i},\nu\>\<\bar{E}_{j},\nu\>\sum_{k}\<Ae_{k},\nabla\xi(e_{k})\>;\nonumber\\
\Delta_{f}\<X_{ij},\xi\>=&|A|^2\<X_{ij},\xi\>+2v_{\omega,ij}+\<\Delta_{f}^{[1]}\omega,X_{ij}^{\flat}\>\label{7},
\end{align}
where $\left\{e_{k}\right\}_{k=1}^{m}$ is a local orthonormal frame of $\Sigma$ and $v_{\omega,ij}=\<E_{j},\nabla_{AE_{i}}\xi\>-\<E_{i},\nabla_{AE_{j}}\xi\>$.
%In particular, if $\omega\in L^{2}_{f}\mathcal{H}^{1}(\Sigma_{f})$, $g_{\omega,ij}:=\<X_{ij},\xi\>$ and $v_{\omega,ij}:=\<E_{j},\nabla_{AE_{i}}\xi\>-\<E_{i},\nabla_{AE_{j}}\xi\>$, we get that
%\begin{equation}
%L_{f}g_{\omega,ij}=2v_{\omega,ij}.
%\end{equation}
\end{lemma}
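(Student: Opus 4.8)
The plan is to establish the seven identities in order, since each one feeds into the next, using throughout the Gauss and Weingarten formulas $\bar{\nabla}_X Y = \nabla_X Y + \<AX,Y\>\nu$ and $\bar{\nabla}_X\nu = -AX$, together with two facts peculiar to the translator setting: from $f = -\<x,\bar{E}_{m+1}\>$ one has $\nabla f = -E_{m+1}$, and the translator equation \eqref{T} reads $H = \<\bar{E}_{m+1},\nu\>$ (with $H = \tr A$), so that $\nabla H = -AE_{m+1}$ by \eqref{2}. The first two identities come for free: differentiating the orthogonal decomposition $\bar{E}_i = E_i + \<\bar{E}_i,\nu\>\nu$ along $\xi$ and using that $\bar{E}_i$ is parallel in $\real{m+1}$, the tangential component gives $0 = \nabla_\xi E_i - \<\bar{E}_i,\nu\>A\xi$, which is \eqref{1}; pairing $\bar{\nabla}_X\nu = -AX$ with $\bar{E}_i$ gives \eqref{2}.

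For \eqref{3} and \eqref{4} I would take divergences. By \eqref{2}, $\Delta\<\bar{E}_i,\nu\> = \mathrm{div}(AE_i)$; expanding with the traced Codazzi equation $\sum_k(\nabla_{e_k}A)(e_k) = \nabla H$ and with \eqref{1} gives $\Delta\<\bar{E}_i,\nu\> = \<\nabla H,E_i\> + \<\bar{E}_i,\nu\>|A|^2$, and then the extra drift term $\<\nabla f,\nabla\<\bar{E}_i,\nu\>\> = \<AE_{m+1},E_i\>$ exactly cancels $\<\nabla H,E_i\> = -\<AE_{m+1},E_i\>$, yielding \eqref{3}. The same recipe starting from $\nabla\<x,\bar{E}_i\> = E_i$, with $\mathrm{div}(E_i) = \<\bar{E}_i,\nu\>H$ from \eqref{1} and the elementary identity $\<E_{m+1},E_i\> + \<\bar{E}_i,\nu\>\<\bar{E}_{m+1},\nu\> = \<\bar{E}_i,\bar{E}_{m+1}\>$, produces \eqref{4}.

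The crux is \eqref{5}, which I expect to be the main obstacle. Writing $u = \<E_i,\xi\>$, \eqref{1} gives $\nabla u = \<\bar{E}_i,\nu\>A\xi + Z$ with $Z$ characterised by $\<Z,Y\> = \<E_i,\nabla_Y\xi\>$. Computing $\mathrm{div}(A\xi)$ (traced Codazzi plus \eqref{1}) and $\mathrm{div}(Z)$ (differentiating $\<Z,e_k\>$ in a frame geodesic at the point and using \eqref{1} again), and adding the drift term $\<\nabla f,\nabla u\>$, reduces $\Delta_f u$ to
\[
\Delta_f u = \<AE_i,A\xi\> + \<\nabla^{*}\nabla\xi,E_i\> - 2\<\bar{E}_i,\nu\>\sum_k\<Ae_k,\nabla_{e_k}\xi\> - \<E_i,\nabla_{E_{m+1}}\xi\>,
\]
the $\<\nabla H,\xi\>$ contributions having already cancelled because $\nabla H = -AE_{m+1}$. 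The key move is to rewrite the rough Laplacian $\nabla^{*}\nabla\xi$ via the Weitzenb\"ock formula for the drift (Witten) Hodge Laplacian, $\Delta_f^{[1]}\omega = \nabla^{*}\nabla\omega + \nabla_{\nabla f}\omega + \Ric_f(\xi,\cdot)$ with $\Ric_f = \Ric + \Hess f$, and to observe that on a translator $\Ric_f = -A^2$: the Gauss equation gives $\Ric = HA - A^2$, while $\Hess f = -\<\bar{E}_{m+1},\nu\>A = -HA$ since $\nabla_X\nabla f = -\nabla_X E_{m+1} = -\<\bar{E}_{m+1},\nu\>AX$ by \eqref{1}. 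Substituting, the $\nabla_{E_{m+1}}\xi$ terms cancel and the two copies of $\<AE_i,A\xi\>$ combine, giving \eqref{5}.

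Finally, \eqref{6} and \eqref{7} are bookkeeping. For \eqref{6} I would apply the Leibniz rule $\Delta_f(vw) = v\,\Delta_f w + w\,\Delta_f v - 2\<\nabla v,\nabla w\>$ with $v = \<\bar{E}_j,\nu\>$, $w = \<E_i,\xi\>$, inserting \eqref{3} for $\Delta_f v$, \eqref{5} for $\Delta_f w$, and \eqref{2} for $\nabla v$. For \eqref{7}, since $\<X_{ij},\xi\> = \<\bar{E}_i,\nu\>\<E_j,\xi\> - \<\bar{E}_j,\nu\>\<E_i,\xi\>$, I would antisymmetrise \eqref{6} in $i,j$: the terms carrying $\sum_k\<Ae_k,\nabla_{e_k}\xi\>$ cancel, the terms $\<\Delta_f^{[1]}\omega,\eta_\bullet\>$ reassemble into $\<\Delta_f^{[1]}\omega,X_{ij}^{\flat}\>$, and after rewriting $\<\nabla\<E_j,\xi\>,AE_i\> = \<\bar{E}_j,\nu\>\<AE_i,A\xi\> + \<E_j,\nabla_{AE_i}\xi\>$ (and symmetrically) via \eqref{1}, the $\<AE_\bullet,A\xi\>$ terms cancel in pairs, leaving exactly $2v_{\omega,ij} = 2(\<E_j,\nabla_{AE_i}\xi\> - \<E_i,\nabla_{AE_j}\xi\>)$.
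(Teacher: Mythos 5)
Your proposal is correct and follows essentially the same route as the paper: Gauss--Weingarten and Codazzi for \eqref{1}--\eqref{4}, the weighted Weitzenb\"ock formula together with $\Ric_f=-\<A\cdot,A\cdot\>$ (which you, unlike the paper, justify explicitly via $\Ric=HA-A^2$ and $\mathrm{Hess}(f)=-HA$) for \eqref{5}, and the Leibniz rule plus antisymmetrisation in $i,j$ for \eqref{6}--\eqref{7}. The only real difference is in \eqref{5}, where the paper expands $\Delta_f\<\eta_i,\omega\>$ by the Bochner product rule and invokes $\Delta_f^{[1]}\eta_i=\nabla\Delta_f\<x,\bar{E}_i\>=0$, whereas you compute the divergences directly (traced Codazzi, drift term, cancellation of the $\<\nabla H,\xi\>$ and $\nabla_{E_{m+1}}\xi$ contributions) and apply the Weitzenb\"ock formula only to $\omega$; your intermediate identity for $\Delta_f\<E_i,\xi\>$ checks out, so both derivations land on the same formula, and your sign $\<\nabla\<E_j,\xi\>,AE_i\>=\<\bar{E}_j,\nu\>\<AE_i,A\xi\>+\<E_j,\nabla_{AE_i}\xi\>$ in the final cancellation is the correct one.
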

%$\<X_{ij},\xi\>=\<\bar{E}_{i},\nu\>\<E_{j},\xi\>-\<\bar{E}_j,\nu\>\<E_{i},\xi\>$
\begin{proof}
In order to prove \eqref{1}, we compute
\begin{align*}
\nabla_{\xi}E_{i}=&\bar{\nabla}_{\xi}E_{i}-\<\bar{\nabla}_{\xi}E_{i},\nu\>\nu\\
=&-\<\bar{E}_{i}, \nu\>\bar{\nabla}_{\xi}\nu=\<\bar{E}_{i},\nu\>A\xi.
\end{align*}
Letting $\left\{e_{j}\right\}_{j=1}^{m}$ be a local orthonormal frame on $\Sigma$, we have that
\begin{align*}
\nabla\<\bar{E}_{i},\nu\>=&\sum_{j}e_{j}\left(\<\bar{E}_{i},\nu\>\right)e_{j}=\sum_{j}\<\bar{E}_{i},\bar{\nabla}_{e_{j}}\nu\>e_{j}\\
=&-\sum_{j}\<AE_{i},e_{j}\>e_{j}=-AE_{i},
\end{align*}
i.e., equation \eqref{2}.

As for \eqref{3}, letting $Y\in T\Sigma$, we have by Codazzi's equation that
\begin{align*}
\<\nabla_{Y}\nabla\<\bar{E}_{i},\nu\>,Y\>=&-\<\nabla_{Y}AE_{i},Y\>\\
=&-\<(\nabla_{Y}A)E_{i}, Y\>-\<AY,\nabla_{Y}E_{i}\>\\
=&- \<(\nabla_{E_{i}}A)Y, Y\>-\<AY, AY\>\<\bar{E}_{i},\nu\>\\
=&-\<(\nabla_{E_{i}}A)Y, Y\>-\<A^{2}Y,Y\>\<\bar{E}_{i}, \nu\>.
\end{align*}
Taking minus the trace of the previous equation we get
\begin{align*}
\Delta\<\bar{E}_{i},\nu\>=&\<\nabla H, E_{i}\>+\<\bar{E}_{i},\nu\>|A|^2\\
=&-\<AE_{m+1}, E_{i}\>+\<\bar{E}_{i},\nu\>|A|^2\\
=&-\<E_{m+1},AE_{i}\>+\<\bar{E}_{i},\nu\>|A|^2\\
=&-\<\nabla f, \nabla\<\bar{E}_{i},\nu\>\>+\<\bar{E}_{i},\nu\>|A|^2,
\end{align*}
that is \eqref{3}. To prove \eqref{4}, letting $Y\in T\Sigma$, we have that
\[
\ \<Y,\nabla\<x,\bar{E}_{i}\>\>=\<\bar{\nabla}_{Y}x, \bar{E}_{i}\>=\<Y, \bar{E}_{i}\>,
\]
i.e. $\nabla\<x, \bar{E}_{i}\>=E_{i}$. Thus we obtain that for $Y\in T\Sigma$,
\begin{align*}
\<\nabla_{Y}\nabla\<x,\bar{E}_{i}\>,Y\>=&\<\nabla_{Y}E_{i},Y\>=\<\bar{\nabla}_{Y}\left(\bar{E}_{i}-\<\bar{E}_{i},\nu\>\nu\right),Y\>\\
=&-\<\bar{E}_{i},\nu\>\<\bar{\nabla}_{Y}\nu,Y\>=\<\bar{E}_{i},\nu\>\<AY,Y\>.
\end{align*} 
Taking minus the trace of this latter and using \eqref{T}, we obtain that
\[
\ \Delta\<x, \bar{E}_{i}\>=-\<\bar{E}_{i},\nu\>H=-\<\bar{E}_{i},\nu\>\<\bar{E}_{m+1},\nu\>.
\]
Moreover
\[
\ \<\nabla\<x, \bar{E}_{i}\>,\nabla f\>=-\<E_{i},E_{m+1}\>,
\]
hence
\[
\ \Delta_{f}\<x,\bar{E}_{i}\>=-\<\bar{E}_{i},\nu\>\<\bar{E}_{m+1},\nu\>-\<E_{i},E_{m+1}\>=-\<\bar{E}_{i},\bar{E}_{m+1}\>.
\]
Let now $\omega\in\Lambda^{1}T^{*}\Sigma$ and $\xi=\omega^{\sharp}$. We recall that the following Weitzenb\"ock formula holds in the weighted setting:
\begin{equation}\label{W}
\Delta_{f}^{[1]}\omega=\nabla_{f}^{*}\nabla \omega+ \mathrm{Ric}_{f}(\omega^{\sharp}),
\end{equation}
where $\nabla^{*}_{f}=\nabla^{*}+i_{\nabla f}$ and $\mathrm{Ric}_{f}=\mathrm{Ric}+\mathrm{Hess}(f)$; see e.g. \cite{IRS_IndexBettiSS}. Using \eqref{W} we get that
\begin{align*}
\Delta_{f}\<E_{i},\xi\>=&\Delta_{f}\<\eta_{i},\omega\>=\<\nabla^{*}_{f}\nabla\eta_{i},\omega\>+\<\eta_{i},\nabla^{*}_{f}\nabla\omega\>-2\<\nabla\eta_{i},\nabla\omega\>\\
=&\<\Delta_{f}^{[1]}\eta_{i},\omega\>-2\mathrm{Ric}_{f}(E_{i},\xi)+\<\Delta_{f}^{[1]}\omega,\eta_{i}\>-2\<\nabla\eta_{i},\nabla\omega\>.
\end{align*}
Noting that
\begin{align*}
\Delta_{f}^{[1]}E_{i}=&\Delta_{f}^{[1]}\nabla\<x, \bar{E}_{i}\>=\nabla\Delta_{f}\<x, \bar{E}_{i}\>=0,\\
\mathrm{Ric}_{f}=&-\<A\cdot, A\cdot\>,
\end{align*}
we have that
\begin{equation}\label{5b}
\ \Delta_{f}\<E_{i}, \xi\>=2\<AE_{i},A\xi\>+\<\Delta_{f}^{[1]}\omega,\eta_{i}\>-2\<\nabla\eta_{i},\nabla\omega\>
\end{equation}
Finally, using \eqref{1}, we get
\[
\ \<\nabla \eta_{i},\nabla \omega\>=\<\nabla E_{i},\nabla \xi\>=\sum_{j}\<\nabla_{e_{j}}E_{i},\nabla_{e_{j}}\xi\>=\<\bar{E}_{i},\nu\>\sum_{j}\<Ae_{j},\nabla_{e_{j}}\xi\>.
\]
Substituting in \eqref{5b} we get \eqref{5}.

As for \eqref{6} we can now compute, using \eqref{2}, \eqref{4}, \eqref{5}, that 
\begin{align*}
\Delta_{f}\left(\<\bar{E}_{j}, \nu\>\<E_{i},\xi\>\right)=&\<E_{i},\xi\>\Delta_{f}\<\bar{E}_{j},\nu\>-2\<\nabla \< E_{i}, \xi\>,\nabla \<\bar{E}_{j},\nu\>\>+\<\bar{E}_{j},\nu\>\Delta_{f}\<E_{i},\xi\>\\
=&+|A|^2\<\bar{E}_{i},\nu\>\<E_{i},\xi\>+2\<\nabla\<E_{i},\xi\>,AE_{j}\>+2\<AE_{i},A\xi\>\<\bar{E}_{j},\nu\>\\
&+\<\Delta_{f}^{[1]}\omega,\eta_{i}\>\<\bar{E}_{j},\nu\>-2\<\bar{E}_{j},\nu\>\sum_{k}\<\bar{E}_{i},\nu\>\<Ae_{k},\nabla\xi(e_{k})\>.
\end{align*}

Using \eqref{6}, we obtain that
\begin{align*}
\Delta_{f}\<X_{ij},\xi\>&=|A|^2\<X_{ij},\xi\>+2\left(\<\nabla\<E_{j},\xi\>,AE_{i}\>-\<\nabla\<E_{i},\xi\>,AE_{j}\>\right)\\
&+2\<\bar{E}_{i},\nu\>\<AE_{j},A\xi\>-2\<\bar{E}_{j},\nu\>\<AE_{i},A\xi\>+\<\Delta_{f}^{[1]}\omega,X_{ij}^{\flat}\>.
\end{align*}
Since, by \eqref{1}, 
\begin{align*}
\<\nabla\<E_{j},\xi\>,AE_{i}\>=&\<\nabla_{AE_{i}}E_{j},\xi\>+\<E_{j},\nabla_{AE_{i}}\xi\>\\
=&-\<\bar{E}_{j},\nu\>\<AE_{i},A\xi\>+\<E_{j},\nabla_{AE_{i}}\xi\>,
\end{align*}
we hence get
\begin{equation*}
\Delta_{f}\<X_{ij},\xi\>=|A|^2\<X_{ij},\xi\>+2\left(\<E_{j},\nabla_{AE_{i}}\xi\>-\<E_{i},\nabla_{AE_{j}}\xi\>\right)+\<\Delta_{f}^{[1]}\omega, X_{ij}^{\flat}\>,
\end{equation*}
i.e., \eqref{7}.
\end{proof}

%%%%%%%%%%%%%%%%%%%%%%%%%%%%%%%%%%%%%%%%%%%%%%%%
\section{Proof of the index plus nullity estimate}\label{SecIndPlusNull}
In this section, square brackets $\left[\cdot\right]$ denote the weighted integrals $\left[h\right]=\int_{\Sigma}h\,e^{-f}d\mathrm{vol}_{\Sigma}$. Moreover, we will denote by $L^{2}(\Sigma_{f})$ and $W^{1,2}(\Sigma_{f})$ the Hilbert spaces corresponding to the weighted measure $e^{-f}d\mathrm{vol}_{\Sigma}$ on $\Sigma$.

We start by proving the following
\begin{lemma}\label{EigenfW12}
Let $x:\Sigma^m\to\real{m+1}$ be a translator with $|A|\in L^{\infty}(\Sigma)$ and $\mathrm{Ind}_{f}(\Sigma)=I<+\infty$. Let $\varphi_{1},\ldots,\varphi_{I}$ be orthogonal eigenfunctions of $L_{f}$ in $L^{2}(\Sigma_{f})$ with negative eigenvalue. Then $\varphi_{j}\in W^{1,2}(\Sigma_{f})$.
\end{lemma}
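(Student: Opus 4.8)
The plan is to exploit the fact that on a translator $\Delta_f = L_f + |A|^2$, so an eigenfunction $\varphi = \varphi_j$ satisfying $L_f\varphi = -\mu\varphi$ with $\mu>0$ is a solution of the linear equation $\Delta_f\varphi = (|A|^2-\mu)\varphi$, where by hypothesis the coefficient $|A|^2-\mu$ is bounded. The standard strategy to upgrade $L^2$ membership to $W^{1,2}$ membership is a Caccioppoli-type estimate obtained by multiplying the equation by $\chi^2\varphi$, where $\chi$ is a suitable cutoff function, and integrating by parts with respect to the weighted measure.

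First I would fix a point $o\in\Sigma$, let $r(\cdot)=\dist(\cdot,o)$, and choose Lipschitz cutoffs $\chi_R$ with $\chi_R\equiv 1$ on the geodesic ball $B_R(o)$, $\chi_R\equiv 0$ outside $B_{2R}(o)$, and $|\nabla\chi_R|\le C/R$. Testing the equation $\Delta_f\varphi=(|A|^2-\mu)\varphi$ against $\chi_R^2\varphi$ and using the weighted integration by parts formula $[\,\langle\nabla u,\nabla v\rangle\,]=[\,(\Delta_f u)\,v\,]$ (recall $\Delta_f=-\operatorname{div}_f\nabla$ with our sign convention), one gets
\[
\Big[\chi_R^2|\nabla\varphi|^2\Big]=-2\Big[\chi_R\varphi\,\langle\nabla\chi_R,\nabla\varphi\rangle\Big]+\Big[(|A|^2-\mu)\chi_R^2\varphi^2\Big].
\]
The first term on the right is absorbed via Cauchy--Schwarz and the inequality $2|\chi_R\varphi\langle\nabla\chi_R,\nabla\varphi\rangle|\le\frac12\chi_R^2|\nabla\varphi|^2+2|\nabla\chi_R|^2\varphi^2$, giving
\[
\tfrac12\Big[\chi_R^2|\nabla\varphi|^2\Big]\le 2\Big[|\nabla\chi_R|^2\varphi^2\Big]+\big(\|A\|_\infty^2+\mu\big)\Big[\varphi^2\Big].
\]
Since $|\nabla\chi_R|^2\le C^2/R^2$ and $\varphi\in L^2(\Sigma_f)$, the first term is bounded by $(C^2/R^2)[\varphi^2]\to 0$, so the right-hand side is bounded uniformly in $R$ by $C'\|\varphi\|_{L^2(\Sigma_f)}^2$. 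Letting $R\to\infty$ and using monotone convergence, $[\,|\nabla\varphi|^2\,]<+\infty$, hence $\varphi\in W^{1,2}(\Sigma_f)$.

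The one point requiring care — and the main (mild) obstacle — is justifying the integration by parts on the noncompact $\Sigma$: one should first carry out the computation on $B_{2R}(o)$, where $\chi_R$ has compact support and $\varphi$ is smooth, so that no boundary terms appear, and only afterwards pass to the limit. One also needs to know a priori that $\varphi$ is smooth and that $\varphi,\nabla\varphi$ are locally $L^2$, which follows from elliptic regularity applied to $L_f\varphi=-\mu\varphi$ since the coefficients of $L_f$ are smooth and $|A|\in L^\infty(\Sigma)$. No completeness-at-infinity issue arises because the cutoff localizes everything to relatively compact sets; the boundedness $|A|\in L^\infty(\Sigma)$ is exactly what makes the zeroth-order term harmless in the final estimate.
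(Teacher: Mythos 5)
Your argument is correct and is essentially the paper's own proof: both test the eigenvalue equation $\Delta_f\varphi_j=(|A|^2+\lambda_j)\varphi_j$ against $\eta^2\varphi_j$ (equivalently integrate $\mathrm{div}_f(\eta^2\varphi_j\nabla\varphi_j)$), absorb the cross term by Young's inequality, use $|A|\in L^{\infty}(\Sigma)$ and $\varphi_j\in L^2(\Sigma_f)$ to get a bound uniform in $R$, and let $R\to\infty$. Your remarks on localizing the integration by parts and on smoothness via elliptic regularity match the paper's use of the $f$-divergence theorem with compactly supported cutoffs.
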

\begin{proof}
Since $\mathrm{Ind}_{f}(\Sigma)=I$ we may find $\varphi_{1},\ldots,\varphi_{I}$ orthogonal eigenfunctions of $L_{f}$ in $L^{2}(\Sigma_{f})$ with negative eigenvalue. Hence, in particular, we have
\[
\ \Delta_{f}\varphi_{j}=|A|^2\varphi_{j}+\lambda_{j}\varphi_{j},
\]
for $j=1,\ldots,I$. Fix an origin $o\in\Sigma$ and denote by $B_{t}$ the geodesic ball in $\Sigma$ of radius $t$ centered at $o$. Let $\eta$ be the cut off-function holding $1$ on $B_{R}$, vanishing on $\Sigma\setminus B_{2R}$ and decaying linearly in between, and denote by $\mathrm{div}_{f}\cdot=e^{f}\mathrm{div}(e^{-f}\cdot)$ the $f$-divergence operator, acting on vector fields on $\Sigma$. Then we have that
\begin{align*}
\mathrm{div}_{f}\left(\eta^{2}\varphi_{j}\nabla\varphi_{j}\right)&=-\eta^2\left(|A|^2+\lambda_{j}\right)\varphi_{j}^2+\<\nabla(\eta^2\varphi_{j}),\nabla\varphi_{j}\>\\
&=-\eta^2\left(|A|^2+\lambda_{j}\right)\varphi_{j}^2+\eta^2|\nabla\varphi_{j}|^2+2\eta\varphi_{j}\<\nabla\eta,\nabla\varphi_{j}\>\\
&\geq-\eta^2\left(|A|^2+\lambda_{j}\right)\varphi_{j}^2+\eta^2|\nabla\varphi_{j}|^2-\varepsilon\eta^2|\nabla\varphi_{j}|^2-\frac{1}{\varepsilon}\varphi_{j}^2|\nabla \eta|^2,
\end{align*}
for every $\varepsilon>0$, where in the last inequality we are using Young's inequality. Hence, choosing $\varepsilon=\frac{1}{2}$, by the $f$-divergence theorem we get
\[
\ \left[\eta^2|\nabla\varphi_{j}|^2\right]\leq2\left[\eta^2\left(|A|^2+\lambda_{j}\right)\varphi_{j}^2\right]+4\left[|\nabla\eta|^2\varphi_{j}^2\right].
\]
By the definition of $\eta$ and the fact that $|A|\in L^{\infty}(\Sigma)$, we get
\begin{equation*}
\int_{B_{R}}|\nabla\varphi_{j}|^2e^{-f}d\mathrm{vol}_{\Sigma}\leq\int_{B_{R+1}}\eta^2|\nabla\varphi_{j}|^2e^{-f}d\mathrm{vol}_{\Sigma}\leq D\left[\varphi_{j}^2\right]<+\infty.
\end{equation*} 
Thus, letting $R\to\infty$, we obtain the desired conclusion.
\end{proof}

\begin{lemma}\label{gW12}
If $|A|\in L^{\infty}(\Sigma)$ and $\omega\in \mathcal{H}^{1}_{f}(\Sigma)$ then $g_{\omega,ij}\doteq\<\omega^{\sharp}, X_{ij}\>\in W^{1,2}(\Sigma_{f})$.
\end{lemma}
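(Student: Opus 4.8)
The plan is to verify the two defining conditions of $W^{1,2}(\Sigma_{f})$ separately. That $g_{\omega,ij}\in L^{2}(\Sigma_{f})$ is immediate: since $|E_{i}|^{2}=1-\langle\bar{E}_{i},\nu\rangle^{2}\le 1$ and $|\langle\bar{E}_{i},\nu\rangle|\le 1$, one has $|X_{ij}|\le 2$ pointwise, whence $|g_{\omega,ij}|\le 2|\omega|$, and $\omega\in\mathcal{H}^{1}_{f}(\Sigma)$ gives $\int_{\Sigma}|\omega|^{2}e^{-f}d\mathrm{vol}_{\Sigma}<+\infty$. The substantive part is the gradient bound, and its main ingredient is the preliminary fact that $\nabla\omega\in L^{2}(\Sigma_{f})$ for every $\omega\in\mathcal{H}^{1}_{f}(\Sigma)$; this is the step where the hypothesis $|A|\in L^{\infty}(\Sigma)$ is really used.

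To establish $\nabla\omega\in L^{2}(\Sigma_{f})$ I would run a Bochner argument. Since $\omega$ is $f$-harmonic one has $\Delta_{f}^{[1]}\omega=0$, so pairing the Weitzenb\"ock formula \eqref{W} with $\eta^{2}\omega$ and using $\mathrm{Ric}_{f}=-\langle A\cdot,A\cdot\rangle$ yields
\[
\int_{\Sigma}\langle\nabla\omega,\nabla(\eta^{2}\omega)\rangle e^{-f}d\mathrm{vol}_{\Sigma}=\int_{\Sigma}\eta^{2}|A\omega^{\sharp}|^{2}e^{-f}d\mathrm{vol}_{\Sigma},
\]
where $\eta$ is the cut-off used in Lemma \ref{EigenfW12}, equal to $1$ on $B_{R}$, supported in $B_{2R}$, with $|\nabla\eta|\le c/R$. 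Expanding the left-hand side, absorbing the cross term $2\eta\langle\nabla\omega,d\eta\otimes\omega\rangle$ by Young's inequality (the term $\tfrac12\int\eta^{2}|\nabla\omega|^{2}e^{-f}$ being a priori finite since $\eta$ has compact support and $\omega$ is smooth), and bounding the right-hand side by $\big(\sup_{\Sigma}|A|\big)^{2}\int_{\Sigma}|\omega|^{2}e^{-f}d\mathrm{vol}_{\Sigma}$, one gets $\int_{B_{R}}|\nabla\omega|^{2}e^{-f}d\mathrm{vol}_{\Sigma}\le D'\int_{\Sigma}|\omega|^{2}e^{-f}d\mathrm{vol}_{\Sigma}$ with $D'$ independent of $R$, and letting $R\to\infty$ gives $\nabla\omega\in L^{2}(\Sigma_{f})$. (This is a Gaffney-type fact for $L^{2}$ $f$-harmonic forms on complete weighted manifolds with $\mathrm{Ric}_{f}$ bounded below, and could alternatively be quoted from the literature.)

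With this at hand the gradient bound for $g_{\omega,ij}$ follows the scheme of Lemma \ref{EigenfW12}. Since $\Delta_{f}^{[1]}\omega=0$, equation \eqref{7} reduces to $\Delta_{f}g_{\omega,ij}=|A|^{2}g_{\omega,ij}+2v_{\omega,ij}$, and $|v_{\omega,ij}|\le 2|A|\,|\nabla\omega|$, so the right-hand side lies in $L^{2}(\Sigma_{f})$ by the $L^{\infty}$ bound on $|A|$, the first part, and the preliminary step. Taking the same cut-off $\eta$, one computes
\[
\mathrm{div}_{f}\!\left(\eta^{2}g_{\omega,ij}\nabla g_{\omega,ij}\right)=\eta^{2}|\nabla g_{\omega,ij}|^{2}-\eta^{2}g_{\omega,ij}\Delta_{f}g_{\omega,ij}+2\eta g_{\omega,ij}\langle\nabla\eta,\nabla g_{\omega,ij}\rangle,
\]
integrates via the $f$-divergence theorem, absorbs the cross term by Young, bounds $\int\eta^{2}g_{\omega,ij}\Delta_{f}g_{\omega,ij}e^{-f}d\mathrm{vol}_{\Sigma}$ by a multiple of $\int_{\Sigma}g_{\omega,ij}^{2}e^{-f}d\mathrm{vol}_{\Sigma}$ plus $\big(\int_{\Sigma}g_{\omega,ij}^{2}e^{-f}d\mathrm{vol}_{\Sigma}\big)^{1/2}\big(\int_{\Sigma}v_{\omega,ij}^{2}e^{-f}d\mathrm{vol}_{\Sigma}\big)^{1/2}$, which is finite and independent of $R$, and notes that $\int_{\Sigma}|\nabla\eta|^{2}g_{\omega,ij}^{2}e^{-f}d\mathrm{vol}_{\Sigma}\le\frac{c^{2}}{R^{2}}\int_{\Sigma}g_{\omega,ij}^{2}e^{-f}d\mathrm{vol}_{\Sigma}\to 0$; letting $R\to\infty$ yields $\nabla g_{\omega,ij}\in L^{2}(\Sigma_{f})$. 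The one point requiring care is the preliminary step: that the integrations by parts in the Bochner argument are legitimate --- which is ensured by compact supports and smoothness of $\omega$ --- and that the absorbed gradient term is a priori finite on each $B_{2R}$; everything after that is the routine Savo--Ros cut-off estimate already carried out in the proof of Lemma \ref{EigenfW12}.
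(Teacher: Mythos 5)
Your proposal is correct, but it takes a genuinely different route from the paper at the one delicate point, namely the term $2\left[v_{\omega,ij}\eta^{2}g_{\omega,ij}\right]$ produced by equation \eqref{7}. The paper never controls $v_{\omega,ij}$ analytically: it runs the same cut-off/Young estimate as in Lemma \ref{EigenfW12} for each pair $(i,j)$ and then \emph{sums over all} $i,j$, exploiting the pointwise algebraic identities $\sum_{i,j}g_{\omega,ij}^{2}=2|\omega|^{2}$, $\sum_{i,j}|\nabla\eta|^{2}g_{\omega,ij}^{2}=2|\nabla\eta|^{2}|\omega|^{2}$ and, crucially, $\sum_{i,j}g_{\omega,ij}v_{\omega,ij}=0$, so the problematic term simply cancels and the bound $\left[\eta^{2}|\nabla g_{\omega,ij}|^{2}\right]\leq 4C\left[\eta^{2}|\omega|^{2}\right]+8\left[|\nabla\eta|^{2}|\omega|^{2}\right]$ needs only $\omega\in L^{2}(\Sigma_{f})$ and $|A|\in L^{\infty}$. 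You instead first prove the auxiliary Gaffney-type fact $\nabla\omega\in L^{2}(\Sigma_{f})$ by the Bochner/cut-off argument applied to \eqref{W} with $\mathrm{Ric}_{f}=-\<A\cdot,A\cdot\>$ (this is where you use $|A|\in L^{\infty}$), and then bound $|v_{\omega,ij}|\leq 2|A|\,|\nabla\omega|$ so that the mixed term can be handled index by index via Cauchy--Schwarz; your integrations by parts and absorption steps are legitimate for the reasons you state (compact support of the cut-off, smoothness of $\omega$ by elliptic regularity, a priori finiteness of the absorbed local term), and your explicit verification of $g_{\omega,ij}\in L^{2}(\Sigma_{f})$ via $|X_{ij}|\leq 2$ is a detail the paper leaves implicit. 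What each approach buys: the paper's summation trick is shorter and purely algebraic at the cancellation, avoiding any estimate on $\nabla\omega$; your argument is slightly longer but does not rely on the special antisymmetric structure of the $X_{ij}$ and yields the additional information $\int_{\Sigma}|\nabla\omega|^{2}e^{-f}d\mathrm{vol}_{\Sigma}\leq 2\|A\|^{2}_{\infty}\int_{\Sigma}|\omega|^{2}e^{-f}d\mathrm{vol}_{\Sigma}$ for every $\omega\in\mathcal{H}^{1}_{f}(\Sigma)$, which is of independent use.
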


\begin{proof}
By Lemma \ref{LemBasEq} we have that
\[
\ \Delta_{f}g_{\omega, ij}=|A|^2
g_{\omega, ij}+2v_{\omega, ij}.
\]
Hence we have that
\begin{align*}
\left[-|A|^2\eta^2g_{\omega, ij}^2-2v_{\omega, ij}\eta^2g_{\omega, ij}\right]=&\left[-\eta^2g_{\omega, ij}^2\Delta_{f}g_{\omega, ij}\right]\\
=&\left[\mathrm{div}_{f}\left(\eta^2g_{\omega, ij}\nabla g_{\omega, ij}\right)\right]-\left[\eta^2|\nabla g_{\omega, ij}|^2\right]-2\left[\eta g_{\omega, ij}\<\nabla \eta, \nabla g_{\omega, ij}\>\right].
\end{align*}
Let $\eta$ be the cut-off function holding $1$ on $B_{R}$, vanishing on $\Sigma\setminus B_{2R}$ and decaying linearly in between. By means of Young's inequality we get that
\begin{align*}
\left[\eta^2|\nabla g_{\omega, ij}|^2\right]=&\left[|A|^2\eta^2g_{\omega, ij}^2\right]+2\left[v_{\omega, ij}\eta^2g_{\omega, ij}\right]-2\left[\eta g_{\omega, ij} \<\nabla \eta, \nabla g_{\omega, ij}\>\right]\\
\leq&\left[\eta^2|A|^2g_{\omega, ij}^2\right]+2\left[v_{\omega, ij}\eta^2g_{\omega, ij}\right]+\varepsilon\left[\eta^2|\nabla g_{\omega, ij}|^2\right]+\frac{1}{\varepsilon}\left[|\nabla \eta|^2 g_{\omega, ij}^2\right],
\end{align*}
for any $\varepsilon>0$. Choose $\varepsilon=\frac{1}{2}$, letting $|A|\leq C$ for some constant $C>0$, we hence obtain
\begin{equation*}
\left[\eta^2|\nabla g_{\omega, ij}|^2\right]\leq 2C\left[\eta^2 g_{\omega, ij}^2\right]+4\left[v_{\omega, ij}\eta^2g_{\omega, ij}\right]+4\left[|\nabla\eta|^2g_{\omega, ij}^2\right].
\end{equation*}
Note now that one can readily compute that
\begin{align*}
\sum_{i,j=1}^{m+1}\eta^{2}g_{\omega, ij}^2=&2\eta^2|\omega|^2,\\
\sum_{i,j=1}^{m+1}|\nabla\eta|^2g_{\omega, ij}^2=&2|\nabla\eta|^2|\omega|^2,\\
\sum_{i,j=1}^{m+1}g_{\omega, ij}v_{\omega, ij}=&0.
\end{align*}
We thus deduce that
\begin{equation*}
\left[\eta^2|\nabla g_{\omega, ij}|^2\right]\leq\sum_{i,j=1}^{m+1}\left[\eta^2|\nabla g_{\omega, ij}|^2\right]\leq 4C\left[\eta^2|\omega|^2\right]+8\left[|\nabla \eta|^2|\omega|^2\right].
\end{equation*}
Hence
\[
\int_{B_{R}}|\nabla g_{\omega, ij}|^2e^{-f}d\mathrm{vol}_{\Sigma}\leq\left[\eta^2|\nabla g_{\omega, ij}|^2\right]\leq D\left[|\omega|^2\right]<+\infty,
\]
with $D>0$ constant. By the dominated convergence Theorem we conclude that $g_{\omega, ij}\in W^{1,2}(\Sigma_{f})$.
\end{proof}

Reasoning as in Proposition 2.4 of \cite{ChaoLi} one can prove the following

\begin{proposition}\label{fPropChaoLi}
Let $x:\Sigma^m\to\real{m+1}$ be a translator with $|A|\in L^{\infty}(\Sigma)$ and $\mathrm{Ind}_{f}(\Sigma)=I<+\infty$, and let $\varphi_{1},\ldots,\varphi_{I}$ be $I$ eigenfunctions of $L_{f}$ corresponding to negative eigenvalues given in Lemma \ref{EigenfW12}. Then for any function $h\in C^{\infty}(\Sigma)\cap W^{1,2}(\Sigma_{f})$ which is $L^{2,f}$ orthogonal to $\varphi_{1}, \ldots, \varphi_{I}$, we have that $Q_{f}(h,h)\geq 0$. Moreover if $Q_{f}(h,h)=0$ then $h$ is a solution of $L_{f}h=0$.
\end{proposition}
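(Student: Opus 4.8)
The plan is to establish Proposition~\ref{fPropChaoLi} by a standard ``finite-index forces non-negativity on an orthogonal complement'' argument, carried out entirely in the weighted Hilbert space $W^{1,2}(\Sigma_f)$, where the key subtlety is that a general $h\in C^\infty(\Sigma)\cap W^{1,2}(\Sigma_f)$ need not be compactly supported, so the definition of $\mathrm{Ind}_f(\Sigma)$ as a supremum over $\Omega\Subset\Sigma$ cannot be applied directly. I would first recall that, since $\mathrm{Ind}_f(\Sigma)=I<\infty$, the bottom of the spectrum of $L_f$ consists of at most $I$ negative eigenvalues with $L^2(\Sigma_f)$-eigenfunctions, and by Lemma~\ref{EigenfW12} these eigenfunctions $\varphi_1,\ldots,\varphi_I$ lie in $W^{1,2}(\Sigma_f)$; moreover the associated bilinear form $Q_f$ extends continuously to all of $W^{1,2}(\Sigma_f)$ because $|A|\in L^\infty(\Sigma)$ makes the potential term bounded. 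The goal is to show $Q_f(h,h)\ge 0$ whenever $h\in C^\infty(\Sigma)\cap W^{1,2}(\Sigma_f)$ is $L^2(\Sigma_f)$-orthogonal to all $\varphi_j$.

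The main step is a cut-off/truncation argument to reduce to compactly supported functions. Fix the linearly-decaying cut-offs $\eta=\eta_R$ (equal to $1$ on $B_R$, vanishing off $B_{2R}$) already used in Lemmas~\ref{EigenfW12} and~\ref{gW12}, and consider $\eta_R h$. Since $\eta_R h$ is compactly supported, we cannot yet apply the index bound to it because it may fail to be orthogonal to the $\varphi_j$; I would therefore project, setting $h_R = \eta_R h - \sum_{j=1}^I c_{j,R}\varphi_j$ with $c_{j,R}=[\eta_R h\,\varphi_j]$, so that $h_R$ is $L^2(\Sigma_f)$-orthogonal to $\mathrm{span}\{\varphi_1,\ldots,\varphi_I\}$. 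Because $h$ is orthogonal to each $\varphi_j$ and $\varphi_j\in L^2(\Sigma_f)$, dominated convergence gives $c_{j,R}=[(\eta_R-1)h\,\varphi_j]\to 0$ as $R\to\infty$, and similarly $\eta_R h\to h$ in $L^2(\Sigma_f)$ with $\eta_R h\to h$, $\nabla(\eta_R h)=\eta_R\nabla h + h\nabla\eta_R$, and $[\,|\nabla\eta_R|^2 h^2\,]\to 0$ since $h\in W^{1,2}(\Sigma_f)$ — hence $\eta_R h\to h$ in $W^{1,2}(\Sigma_f)$ and therefore $h_R\to h$ in $W^{1,2}(\Sigma_f)$. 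Now $h_R$ is a compactly supported Lipschitz function orthogonal to the negative eigenspace; by the variational characterization of the Morse index (the subspace where $Q_f\ge 0$ is exactly the orthogonal complement of the span of the negative eigenfunctions, and this passes to $W^{1,2}_0$ on the relatively compact domain containing $\mathrm{supp}\,h_R$), we get $Q_f(h_R,h_R)\ge 0$. Passing $R\to\infty$ using continuity of $Q_f$ on $W^{1,2}(\Sigma_f)$ yields $Q_f(h,h)\ge 0$.

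For the final assertion, suppose $Q_f(h,h)=0$ with $h$ as above. Then $h$ minimizes $Q_f$ over the closed subspace $\mathcal{V}=\{v\in W^{1,2}(\Sigma_f): [v\varphi_j]=0,\ j=1,\ldots,I\}$, so for every $v\in\mathcal{V}$ the first variation vanishes: $Q_f(h,v)=0$, i.e. $[\,\langle\nabla h,\nabla v\rangle - |A|^2 h v\,]=0$. For arbitrary $w\in C^\infty_0(\Sigma)$, write $w = v + \sum_j [w\varphi_j]\varphi_j$ with $v\in\mathcal{V}$; using that each $\varphi_j$ solves $L_f\varphi_j=\lambda_j\varphi_j$ and $h\perp\varphi_j$ (so $[\langle\nabla h,\nabla\varphi_j\rangle - |A|^2 h\varphi_j] = -[h\,L_f\varphi_j/\ldots]$ — more precisely $Q_f(h,\varphi_j)=\lambda_j[h\varphi_j]=0$ after integrating by parts, which is justified since $\varphi_j\in W^{1,2}(\Sigma_f)$ is an $L^2$-eigenfunction), we conclude $Q_f(h,w)=0$ for \emph{all} $w\in C^\infty_0(\Sigma)$. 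This is precisely the weak formulation of $L_f h=0$, and elliptic regularity gives that $h$ is a smooth Jacobi function. \textbf{The main obstacle} I anticipate is making the index-comparison step rigorous: one must justify that the finite generalized Morse index, defined as a supremum over compactly supported functions on relatively compact domains, genuinely controls $Q_f$ on the infinite-dimensional space $W^{1,2}(\Sigma_f)$ after the cut-off-and-project reduction — in particular that $\mathrm{supp}\,h_R$ being compact lets us apply the index bound to $h_R$ even though $h_R$ itself is only Lipschitz (handled by density of $C^\infty_0$ in $W^{1,2}_0$ on the domain) and that the projection does not destroy the estimate (handled by the $W^{1,2}$-convergence above). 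The integration-by-parts identities $Q_f(h,\varphi_j)=\lambda_j[h\varphi_j]$ require the same care as in Lemma~\ref{EigenfW12}, using the $f$-divergence theorem together with the cut-off decay, but these are the routine computations I would not spell out in full.
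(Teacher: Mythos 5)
The weak point is the central step of your non-negativity argument: after cutting off and projecting, you assert that $Q_f(h_R,h_R)\ge 0$ ``by the variational characterization of the Morse index'', on the grounds that $h_R$ is compactly supported and $L^2(\Sigma_f)$-orthogonal to $\varphi_1,\ldots,\varphi_I$. This does not follow from the definition of $\mathrm{Ind}_f$. The index of the relatively compact domain $\Omega\supset\mathrm{supp}\,h_R$ controls $Q_f$ only on the orthogonal complement of the \emph{Dirichlet} negative eigenfunctions of $L_f$ on $\Omega$, and these are not the global eigenfunctions $\varphi_j$ (nor their restrictions); orthogonality to the $\varphi_j$ gives no a priori control of the components of $h_R$ along the Dirichlet negative modes of $\Omega$. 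On all of $\Sigma$, the statement ``$Q_f\ge 0$ on the $L^{2,f}$-orthogonal complement of $\mathrm{span}\{\varphi_1,\ldots,\varphi_I\}$'' is essentially the proposition itself, so invoking it at this point is circular. The rest of your write-up (the $W^{1,2}(\Sigma_f)$-convergence of the cut-offs, the equality case via the first variation on the orthogonal complement, the identity $Q_f(h,\varphi_j)=\lambda_j[h\varphi_j]=0$, and elliptic regularity) is fine once this step is supplied.

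The standard repair, and the route the paper intends by its reference to Proposition 2.4 of \cite{ChaoLi}, runs the argument by contradiction and in fact makes your cut-and-project of $h$ unnecessary: if $h\in C^\infty(\Sigma)\cap W^{1,2}(\Sigma_f)$ were orthogonal to all $\varphi_j$ with $Q_f(h,h)<0$, then since $Q_f(\varphi_j,\varphi_k)=\lambda_j\delta_{jk}[\varphi_j^2]$ and $Q_f(\varphi_j,h)=\lambda_j[\varphi_j h]=0$, the form $Q_f$ would be negative definite on the $(I+1)$-dimensional space $\mathrm{span}\{\varphi_1,\ldots,\varphi_I,h\}$. Multiplying all these functions by the cut-offs $\eta_R$ and using that they lie in $W^{1,2}(\Sigma_f)$ (Lemma \ref{EigenfW12} and the hypothesis on $h$, together with $|A|\in L^\infty$ so that $Q_f$ is continuous on $W^{1,2}(\Sigma_f)$), one sees that $Q_f$ remains negative definite on the compactly supported $(I+1)$-dimensional span for $R$ large, which forces $\mathrm{Ind}^{L_f}(B_{2R})\ge I+1$ and contradicts $\mathrm{Ind}_f(\Sigma)=I$. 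With this lemma in place, your limiting and equality-case arguments go through as written.
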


We can now provide the proof of our index plus nullity estimate.

\begin{proof}[Proof of Theorem \ref{ThIndPlusNullEst}] Since we are assuming that $\mathrm{Ind}_{f}(\Sigma)=I<+\infty$, by \cite[Section 4]{IR_fMin}, we know that $\mathcal{V}\doteq\mathcal{H}^{1}_{f}(\Sigma)$ is finite dimensional. By Lemma \ref{EigenfW12} there exist $\varphi_{1},\ldots,\varphi_{I}$ smooth $W^{1,2}(\Sigma_{f})$ eigenfunctions of $L_{f}$ corresponding to negative eigenvalues.  Consider the linear map $F:\mathcal{V}\to\real{{{m+1}\choose{2}}I}$ given by
\[
\ F(\omega)=\left([g_{\omega,ij}\varphi_{l}]\right),
\]
with $1\leq i,j\leq m+1$ and $1\leq l\leq I$. Assume that $k\doteq\mathrm{dim}\mathcal{V}>{{m+1}\choose{2}}I$ (if this is not the case we would get a better estimate than our claim). Then there exist at least $k-{{m+1}\choose{2}}I$ linearly independent $f$-harmonic $1$-forms $\omega\in\mathrm{Ker}\,F$. Thus, by Proposition 2 in \cite{ChengZhou_Index} and Lemma \ref{LemBasEq} we get
\begin{equation*}
0\leq \left[g_{\omega,ij}L_{f}g_{\omega,ij}\right]=Q_{f}(g_{\omega,ij},g_{\omega,ij})=2\left[g_{\omega,ij}v_{\omega,ij}\right].
\end{equation*}
We note now that one can prove directly, using the definitions, that
\begin{equation*}
\sum_{i,j=1}^{m+1}\left[g_{\omega,ij}v_{\omega,ij}\right]=0.
\end{equation*}
Hence
\[
\ \sum_{i,j=1}^{m+1}Q_{f}(g_{\omega,ij},g_{\omega,ij})=0,
\]
and, since each of these terms is nonnegative, we get that $g_{\omega,ij}\in \mathrm{Ker}\,Q_{f}$ for every $1\leq i,j \leq m+1$. Since by Lemma \ref{gW12} we have that $g_{\omega, ij}\in W^{1,2}(\Sigma_{f})$, by Proposition \ref{fPropChaoLi} we obtain that $g_{\omega,ij}\in \mathrm{Ker}\,L_{f}$. To conclude the proof we have to show that the $k-{{m+1}\choose{2}}I$ linearly independent $f$-harmonic forms $\omega$ generate at least $\frac{2}{m(m+1)}k-I$ linearly independent functions $g_{\omega,ij}$. This is the content of the next lemma. Thus
\[
\ \mathrm{Null}_{f}(\Sigma)\geq\frac{2}{m(m+1)}k-I,
\] 
and \eqref{IndEst} follows. Remark \ref{Rem1.1} yields the second part of the theorem.
\end{proof}

By minor modifications to the proof of Proposition 4.3 in \cite{ChaoLi} one can prove the  following

\begin{lemma}
Let $\mathcal{H}$ be an $h$-dimensional subspace of $\mathcal{H}^{1}_{f}(\Sigma)$. Then the set 
\[
\ \left\{g_{\omega, ij}\,:\,\omega\in\mathcal{H},\quad 1\leq i,j\leq m+1\right\}
\]
has at least $\frac{2}{m(m+1)}h$ linearly independent $L^{2}(\Sigma_{f})$ smooth functions on $\Sigma$.
\end{lemma}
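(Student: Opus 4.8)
The plan is to reduce the statement to a single injectivity fact and then count dimensions. Write $V$ for the linear span in $L^{2}(\Sigma_{f})$ of all the functions $g_{\omega,ij}$ with $\omega\in\mathcal{H}$ and $1\le i,j\le m+1$. Since $g_{\omega,ij}=-g_{\omega,ji}$ and $g_{\omega,ii}=0$, already the pairs with $1\le i<j\le m+1$ generate $V$, and there are $\binom{m+1}{2}=\frac{m(m+1)}{2}$ of them. I would then introduce the linear map
\[
\Phi\colon\mathcal{H}\longrightarrow V^{\binom{m+1}{2}},\qquad \Phi(\omega)=\bigl(g_{\omega,ij}\bigr)_{1\le i<j\le m+1},
\]
and prove that $\Phi$ is injective; the estimate is then immediate.

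The heart of the matter is the pointwise claim that the tangent vectors $\{X_{ij}(p):1\le i<j\le m+1\}$ span $T_{p}\Sigma$ at every $p\in\Sigma$. To see this I would observe that $X_{ij}$ is exactly the tangential projection to $\Sigma$ of the ambient vector $\iota_{\nu}(\bar{E}_{i}\wedge\bar{E}_{j})=\langle\bar{E}_{i},\nu\rangle\bar{E}_{j}-\langle\bar{E}_{j},\nu\rangle\bar{E}_{i}$. As the bivectors $\bar{E}_{i}\wedge\bar{E}_{j}$ span $\Lambda^{2}\real{m+1}$, the image of the contraction $\iota_{\nu}$ is precisely $\nu^{\perp}=T_{p}\Sigma$: indeed $\langle\iota_{\nu}(\alpha\wedge\beta),\nu\rangle=0$ for all $\alpha,\beta$, while $\iota_{\nu}(\nu\wedge v)=v$ for every $v\in T_{p}\Sigma$. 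Hence the $X_{ij}(p)$ span $T_{p}\Sigma$. Consequently, if $\Phi(\omega)=0$ then $\langle\omega^{\sharp},X_{ij}\rangle\equiv0$ for all $i<j$, and since $\omega^{\sharp}$ is tangent to $\Sigma$ this forces $\omega^{\sharp}\equiv0$, i.e. $\omega=0$; thus $\Phi$ is injective.

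Injectivity of $\Phi$ gives $h=\dim\mathcal{H}\le\dim V^{\binom{m+1}{2}}=\binom{m+1}{2}\dim V$, that is $\dim V\ge\frac{2}{m(m+1)}h$. Since $V$ is spanned by the $g_{\omega,ij}$, one can therefore extract at least $\frac{2}{m(m+1)}h$ linearly independent functions among them; each is smooth by construction and lies in $W^{1,2}(\Sigma_{f})\subset L^{2}(\Sigma_{f})$ by Lemma \ref{gW12}, which is the assertion.

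I expect the only non-formal point to be the spanning property of the $X_{ij}$; once that is established the rest is pure linear algebra. In particular no connectedness or genericity hypothesis on $\Sigma$ is needed, since the spanning statement is checked separately at each point while linear independence of the $g_{\omega,ij}$ is a global condition on $\Sigma$.
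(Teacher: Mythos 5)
Your argument is correct and is essentially the paper's proof (which simply defers to Proposition 4.3 of \cite{ChaoLi}): one shows the map $\omega\mapsto(g_{\omega,ij})_{1\le i<j\le m+1}$ is injective because the vectors $X_{ij}(p)$ span $T_p\Sigma$ at every point, and then the bound $\dim\mathcal{H}\le\binom{m+1}{2}\dim V$ gives the count. The only minor remark is that membership of $g_{\omega,ij}$ in $L^{2}(\Sigma_f)$ follows directly from $|X_{ij}|\le 2$ and $\int_\Sigma|\omega|^2e^{-f}d\mathrm{vol}_\Sigma<\infty$, so you do not need to invoke Lemma \ref{gW12}, which carries the extra hypothesis $|A|\in L^{\infty}(\Sigma)$ not present in the statement of this lemma.
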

%%%%%%%%%%%%%%%%%%%%%%%%%%%%%%%%%%%%%%%%%%%%%%
\section{Estimating the nullity term in some special situations}\label{EstNull}

When there exists a point where all principal curvatures are distinct we can estimate the nullity of the stability operator. Theorem \ref{ThIndEst} easily follows from Equation \eqref{7}, the proof of Theorem \ref{ThIndPlusNullEst}, and the following

\begin{proposition}
Let $x:\Sigma^m\rightarrow \real{m+1}$ be a translator. Suppose that at one point $p$ on $\Sigma$ all principal curvatures of $\Sigma$ are different. Then the dimension of the function space 
\[
\mathcal{W}:=\{\omega\in \mathcal{H}^{1}_{f}(\Sigma)  \ :\ \nabla \omega(AX,Y)=\nabla \omega(AY,X), \,\forall\, X,Y\in T\Sigma\}
\]
is at most $2m-1$.
\end{proposition}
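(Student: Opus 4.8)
The plan is to study the condition $\nabla\omega(AX,Y)=\nabla\omega(AY,X)$ as a pointwise linear constraint on the $2$-jet of $\omega$, evaluated at the distinguished point $p$, and show that it forces $\omega$ to lie in a space of dimension at most $2m-1$ via a linear-algebraic argument at $p$ combined with a unique-continuation/overdetermined-system principle. Concretely, I would first introduce at $p$ a principal orthonormal frame $\{e_i\}$ diagonalizing $A$, with $Ae_i=\kappa_i e_i$ and all $\kappa_i$ distinct. Writing $S:=\nabla\omega$ for the (not necessarily symmetric) $(0,2)$-tensor that is the covariant derivative of $\omega$, the defining relation of $\mathcal W$ says that the bilinear form $(X,Y)\mapsto S(AX,Y)$ is symmetric, i.e. $S(AX,Y)-S(X,AY)$ is skew in $X,Y$ only in the trivial way — more precisely $S(AX,Y)=S(AY,X)$ for all $X,Y$. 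Since $d\omega=0$ gives that the skew part $S_{[ij]}=\tfrac12(S_{ij}-S_{ji})$ vanishes, $S$ is already symmetric, so the condition becomes: the symmetric tensor $S$ commutes appropriately with $A$. In the principal frame at $p$ this reads $\kappa_i S_{ij}=\kappa_j S_{ij}$, hence $(\kappa_i-\kappa_j)S_{ij}=0$, and since the $\kappa_i$ are pairwise distinct we conclude $S_{ij}(p)=0$ for $i\neq j$; that is, $\nabla\omega$ is diagonal at $p$.

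The second step is to feed this back into the structure equations. From Equation \eqref{5} (or rather its $f$-harmonic specialization) we know $\Delta_f^{[1]}\omega=0$, and the Weitzenböck/Bochner identity \eqref{W} with $\Ric_f=-\langle A\cdot,A\cdot\rangle$ gives $\nabla_f^*\nabla\omega=\langle A\cdot,A\cdot\rangle(\omega^\sharp,-)$, so $\tr S$ is controlled and in fact $-\Delta_f(\tfrac12|\omega|^2)+|\nabla\omega|^2 = |A\omega^\sharp|^2\ge 0$ type relations hold. But the point is sharper: I want to bound the dimension of the evaluation map $\mathcal W\ni\omega\mapsto(\omega(p),\nabla\omega(p))$. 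We have just shown $\nabla\omega(p)$ is diagonal, so it is determined by $m$ numbers $S_{11}(p),\dots,S_{mm}(p)$; together with the $m$ components of $\omega(p)$ this is $2m$ parameters. To reach $2m-1$ rather than $2m$ I would use one more relation among these parameters coming from differentiating the harmonicity/closedness equations once more at $p$, or from the trace constraint $\sum_i S_{ii}(p) = \langle\nabla f,\omega^\sharp\rangle(p)$ combined with $\delta_f\omega=0$ which reads $\tr S = \langle\nabla f,\xi\rangle$; since $\langle\nabla f,\xi\rangle(p)=\langle E_{m+1},\xi\rangle(p)$ is itself a linear functional of $\omega(p)$ (because $\nabla\langle x,\bar E_{m+1}\rangle=E_{m+1}$), this expresses one of the $S_{ii}(p)$ in terms of the others and of $\omega(p)$, cutting the count to $2m-1$.

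The final step is the unique-continuation argument: I would argue that the linear map $\Phi:\mathcal W\to\mathbb R^{2m-1}$, $\omega\mapsto\big(\omega(p),\,(S_{ii}(p))_{i}\big)$ (subject to the one trace relation) is injective. An element $\omega\in\ker\Phi$ is an $f$-harmonic $1$-form vanishing to first order at $p$ with $\nabla\omega(p)=0$; since $\omega$ satisfies the second-order elliptic system $\Delta_f^{[1]}\omega=0$ (equivalently the Bochner-type equation), Aronszajn-type unique continuation forces $\omega\equiv 0$. Hence $\dim\mathcal W\le 2m-1$.

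\textbf{Main obstacle.} The delicate point is getting the bound down to $2m-1$ rather than merely $2m$, and more importantly ensuring that the diagonal-at-$p$ conclusion is genuinely the \emph{only} constraint one needs — i.e. that no further differentiation is required to make the jet map injective, and that the trace identity really provides an independent linear relation valid for \emph{all} $\omega\in\mathcal W$ and not just generically. One must check carefully that $\langle E_{m+1},\xi\rangle(p)$ does not vanish identically on $\mathcal W$ (otherwise it gives no reduction) or, if it can vanish, argue the reduction differently — for instance by exhibiting directly an $(m-1)$-dimensional constraint on $(S_{ii}(p))$ using the off-diagonal second derivatives of the Codazzi-type relations in \eqref{3}. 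Managing the interplay between the algebraic consequence of distinct principal curvatures and the analytic unique-continuation input — making sure the former pins down enough of the $2$-jet that the latter applies — is where the real work lies; everything else is bookkeeping with the frame $\{e_i\}$ and the identities of Lemma \ref{LemBasEq}.
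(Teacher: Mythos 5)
Your first two steps coincide with the paper's: closedness makes $\nabla\omega$ symmetric, distinct principal curvatures force $\nabla\omega(e_i,e_j)=0$ for $i\neq j$, and the constraint $\delta_f\omega=0$ (the paper's relation \eqref{RelTromega}) always eliminates one diagonal entry of $\nabla\omega(p)$ in favour of the others and of $\omega(p)$ — your worry about $\langle E_{m+1},\xi\rangle(p)$ possibly vanishing is immaterial, since the trace identity gives the reduction to $2m-1$ parameters in every case. The genuine gap is your last step, and it is exactly the point you flag as the ``main obstacle'': injectivity of the $1$-jet map $\Phi$ cannot be obtained from Aronszajn-type unique continuation. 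That theorem (and any strong unique continuation statement for a second-order elliptic system such as $\Delta_f^{[1]}\omega=0$) requires vanishing to \emph{infinite} order at $p$, or vanishing on an open set; vanishing of $\omega(p)$ and $\nabla\omega(p)$ alone is far from enough. For instance, in the flat plane the harmonic $1$-form $d\,\mathrm{Re}(z^{3})$ has vanishing value and vanishing covariant derivative at the origin without being identically zero, so a kernel element of your $\Phi$ need not vanish on the basis of its $1$-jet alone. As written, the proof does not close.

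What the paper does instead is to exploit the defining condition of $\mathcal{W}$ on a whole neighbourhood $U_p$ where the principal curvatures remain distinct: there $\nabla\omega(e_i,e_j)=0$ for $i\neq j$ holds at \emph{every} point, so the $2m$ functions $\phi_i$ built from $\omega(e_i)$ and $\nabla\omega(e_i,e_i)$ determine $\nabla\omega$ throughout $U_p$, and, following the argument of \cite[Proposition 5]{AmbrozioCarlotoSharp_PAMS}\footnote{Proposition 5 of \cite{AmbrozioCarlottoSharp_PAMS}.}, one shows that these functions satisfy a first-order ODE-type system along curves in $U_p$ (their derivatives are expressed back in terms of the $\phi_j$ using the structure equations, harmonicity, and the diagonality of $\nabla\omega$). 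Hence the values $(\phi_1,\dots,\phi_{2m})(q)$ at any $q\in U_p$ are determined by the data at $p$, which by \eqref{RelTromega} amounts to $2m-1$ numbers. If these vanish, then $\omega\equiv 0$ on the \emph{open set} $U_p$, and only at that stage does unique continuation (from an open set, which is legitimate) give $\omega\equiv 0$ on $\Sigma$. So the missing ingredient in your proposal is precisely this propagation step on $U_p$: you must convert membership in $\mathcal{W}$ near $p$ into a first-order system for the jet data, rather than invoking unique continuation from a pointwise $1$-jet.
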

\begin{proof}
Let $x:\Sigma^m \to \real{m+1}$ be a translator and let $f(p)=-\left\langle p, \overline{E}_{m+1}\right\rangle$, $p\in\real{m+1}$. We choose an orthonormal frame $\left\{e_{i}\right\}$ on $\Sigma$ so that in a small neighbourhood $U_{p}$ of the point $p$ the $e_i$'s are principal directions with corresponding (all distinct) principal curvatures $\lambda_i$, $1\leq i\leq m$. Then, for any $\omega \in \mathcal{W}$, we get $\nabla \omega (e_i,e_j)=0$ for $i\neq j$, $i,j=1,\dots,m$. Moreover, since $\delta_{f}\omega=0$, we have that
\begin{equation}\label{RelTromega}
\nabla\omega(e_{m},e_{m})=-\sum_{i=1}^{m-1}\nabla\omega(e_{i},e_{i})+\omega(\nabla f)=-\sum_{i=1}^{m-1}\nabla\omega(e_{i},e_{i})+\sum_{i=1}^{m}\<\nabla f,e_{i}\>\omega(e_{i}).
\end{equation}
Hence $\omega(e_{1}),\ldots, \omega(e_{m}), \nabla\omega(e_{1}, e_{1}), \ldots,\nabla\omega(e_{m-1},e_{m-1})$ completely determine the tensor $\nabla \omega$ on $U_{p}$. 

Let us now consider the functions defined on  $U_p$ by
\[
\phi_i(q)=
\begin{cases}
\omega(e_i)(q),\quad \textrm{for }1 \leq i \leq m\\
\nabla\omega(e_{i-m},e_{i-m})(q),\quad\textrm{for }m+1\leq i \leq 2m.
\end{cases}
\]
Keeping in mind Equation \eqref{RelTromega}, we can reason, with minor modifications, as in \cite[Proposition 5]{AmbrozioCarlottoSharp_PAMS} and prove that if $\omega\in \mathcal{W}$ then, for every $q \in U_p$, the values of $(\phi_1,\dots, \phi_{2m})(q)$ are uniquely determined by $(\phi_1,\dots, \phi_{2m-1})(p)$. Hence the space $\mathcal{W}|_{U_p}$ has dimension at most $2m-1$ and the general statement over $\Sigma$ follows by unique continuation.
\end{proof}

%%%%%%%%%%%%%%%%%%%%%%%%%%%%%%%%%%%%%%%%%%%%%%%%
\section{Relating $f$-harmonic $1$-forms and topology}\label{SecRelfHarmTop}

Recall that, given a Riemannian manifold $\Sigma$, we can define the cohomology with compact support  $H_c^k(\Sigma)$ as follows. Consider the exact sequence:
\[
\cdots \longrightarrow C^{\infty}_c(\Lambda^{k-1}T^*\Sigma)\longrightarrow C^{\infty}_c(\Lambda^{k}T^*\Sigma)\longrightarrow C^{\infty}_c(\Lambda^{k+1}T^*\Sigma)\longrightarrow\cdots
\] 
Then we can define
\[
H^k_c(\Sigma)=\mathrm{ker}\{d C^{\infty}_c(\Lambda^{k}T^*\Sigma)\rightarrow C^{\infty}_c(\Lambda^{k+1}T^*\Sigma)/dC^{\infty}_c(\Lambda^{k-1}T^*\Sigma)\}.
\]
In presence of a weighted $L^2$-Sobolev inequality, one can relate the space $\mathcal{H}^{1}_{f}(\Sigma)$ to the space $H^1_c(\Sigma)$. Indeed we have the validity of the following proposition inspired by \cite{Carron_Sem}; compare with \cite[Proposition 3.2]{KS}.

\begin{proposition}
Let $\Sigma^m_f$ be a complete weighted manifold satisfying, for some $0\leq\alpha<1$, the weighted $L^2$-Sobolev inequality
\begin{equation}\label{eq:weightedSobolev}
\Big(\int_\Sigma u^\frac{2}{1-\alpha}e^{-f}d\mathrm{vol}_\Sigma\Big)^{1-\alpha}\leq S(\alpha)^2 \int_\Sigma|\nabla u|^2 e^{-f}d\mathrm{vol}_\Sigma,
\end{equation}
for some positive constant $S(\alpha)$ and for every $u\in C^\infty_c(\Sigma)$. Then
\[
\mathrm{dim}(\mathcal{H}^{1}_{f}(\Sigma))\geq \mathrm{dim}(H_c^1(\Sigma)).
\]
\end{proposition}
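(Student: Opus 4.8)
The plan is to represent a given compactly supported closed $1$-form by a harmonic representative in the weighted $L^2$ sense, and to show this representation is injective on cohomology, exactly as in Carron's argument but carried out with respect to the weighted measure $e^{-f}d\mathrm{vol}_\Sigma$. Concretely, fix a class $[\sigma]\in H^1_c(\Sigma)$ with representative $\sigma\in C^\infty_c(\Lambda^1T^*\Sigma)$, $d\sigma=0$. I would first establish that $\sigma$ is $L^2(\Sigma_f)$-orthogonal to the image of $d$ acting on compactly supported functions, decompose (using the weighted Sobolev inequality to control the relevant function spaces), and produce a form $\omega=\sigma-df_\sigma$ with $f_\sigma$ in an appropriate weighted Sobolev completion such that $\omega$ is weighted-harmonic, i.e. $d\omega=\delta_f\omega=0$; here $\delta_f\omega=\delta\omega+i_{\nabla f}\omega$ as in the paper. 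The point of the Sobolev inequality \eqref{eq:weightedSobolev} is that it guarantees $\omega\in L^2(\Sigma_f)$: one solves $\Delta_f u=\delta_f\sigma$ in the completion of $C^\infty_c(\Sigma)$ under the Dirichlet norm $[\,|\nabla u|^2\,]$, and \eqref{eq:weightedSobolev} embeds this completion into $L^{2/(1-\alpha)}(\Sigma_f)$, so the solution has a genuine function behaviour and $\nabla u\in L^2(\Sigma_f)$, whence $\omega=\sigma-du\in\mathcal H^1_f(\Sigma)$.

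The key steps, in order: (i) set up the weighted Hilbert space $H$ = completion of $C^\infty_c(\Sigma)$ with respect to $\|u\|_H^2=\int_\Sigma|\nabla u|^2e^{-f}d\mathrm{vol}_\Sigma$, noting that \eqref{eq:weightedSobolev} makes $\|\cdot\|_H$ a genuine norm and gives a continuous embedding $H\hookrightarrow L^{2/(1-\alpha)}(\Sigma_f)$; (ii) given $\sigma$ as above, use Lax–Milgram/Riesz on $H$ to find $u\in H$ with $\int_\Sigma\langle\nabla u,\nabla\varphi\rangle e^{-f}=\int_\Sigma\langle\sigma,\nabla\varphi\rangle e^{-f}$ for all $\varphi\in C^\infty_c(\Sigma)$, i.e. $\Delta_f u=\delta_f\sigma$ weakly; (iii) set $\omega=\sigma-du$; then $d\omega=d\sigma=0$ and $\delta_f\omega=0$ weakly, hence (by weighted elliptic regularity, since $\Delta_f^{[1]}\omega=(d\delta_f+\delta_f d)\omega=0$) $\omega$ is smooth and harmonic, and $\omega\in L^2(\Sigma_f)$ because $\sigma$ has compact support and $du\in L^2(\Sigma_f)$ by construction; so $\omega\in\mathcal H^1_f(\Sigma)$; (iv) check the map $[\sigma]\mapsto\omega$ is well-defined and injective: if $\sigma=d\psi$ with $\psi\in C^\infty_c(\Sigma)$ then $u=\psi$ solves (ii) (uniqueness of the $H$-solution), so $\omega=0$; conversely if the harmonic representative $\omega$ of $[\sigma]$ vanishes then $\sigma=du$ with $u\in H$, and an integration-by-parts/approximation argument (again using the Sobolev bound to justify that $u$ can be cut off without losing control) shows $\sigma$ is exact \emph{with compact support}, so $[\sigma]=0$ in $H^1_c(\Sigma)$. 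Linearity of $[\sigma]\mapsto\omega$ is immediate, and injectivity gives $\dim H^1_c(\Sigma)\leq\dim\mathcal H^1_f(\Sigma)$.

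The main obstacle is step (iv), specifically the implication ``$\omega=0$ in $\mathcal H^1_f\Rightarrow[\sigma]=0$ in $H^1_c$'': one knows $\sigma=du$ with $u\in H$, and must upgrade exactness in the non-compact (finite-energy) category to exactness by a \emph{compactly supported} function, which is where the hypothesis $\sigma\in C^\infty_c$ and the Sobolev/parabolicity structure really enter — this is precisely the delicate point in Carron's original argument and I would follow it, adapting the cut-off estimates to the weighted measure and invoking the embedding $H\hookrightarrow L^{2/(1-\alpha)}(\Sigma_f)$ to control the error terms generated by $|\nabla\eta|$. A secondary technical point is ensuring weighted elliptic regularity and the weighted Weitzenböck identity \eqref{W} apply to conclude smoothness and the splitting $d\omega=\delta_f\omega=0$ from $\Delta_f^{[1]}\omega=0$; this is standard given the smoothness of $f$ and the completeness of $\Sigma_f$, and is already used freely elsewhere in the paper.
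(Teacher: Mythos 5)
Your proposal is correct in substance and follows the same overall strategy as the paper: both proofs amount to showing that the natural map from $H^1_c(\Sigma)$ into the weighted $L^2$ harmonic forms is injective, and in both the crux is upgrading ``$\sigma=du$ with a finite-energy primitive'' to ``$\sigma$ exact with \emph{compactly supported} primitive''. The difference is one of packaging: you build the harmonic representative by hand, via Lax--Milgram on the Dirichlet completion $H$ (which is the orthogonal projection of $\sigma$ onto $\overline{\{dg:g\in C^\infty_c(\Sigma)\}}$ in disguise), whereas the paper quotes Bueler's weighted Hodge--Kodaira decomposition $L^{2,f}(\Lambda^1T^*\Sigma)=A\oplus B_f\oplus\mathcal{H}^1_f(\Sigma)$, identifies the reduced $L^{2,f}$ cohomology $H^1_f(\Sigma)=Z^1_f(\Sigma)/A$ with $\mathcal{H}^1_f(\Sigma)$, and shows the natural map $H^1_c(\Sigma)\to H^1_f(\Sigma)$ is injective; these are the same map, and your variational route is if anything more self-contained.

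Where the two treatments diverge usefully is at your step (iv). You propose closing it by weighted cut-off estimates in the spirit of Carron; the paper instead argues as follows, which avoids cut-offs: if $\sigma\in A$, take $u_j\in C^\infty_c(\Sigma)$ with $du_j\to\sigma$ in $L^{2,f}$; the Sobolev inequality \eqref{eq:weightedSobolev} makes $\{u_j\}$ Cauchy in $L^{2/(1-\alpha)}(\Sigma_f)$, so $u_j\to u$ there with $du=\sigma$ (weighted version of Carron's Lemma 1.11). Since $\sigma$ has compact support, $u$ is locally constant outside a compact set, and since $u\in L^{2/(1-\alpha)}(\Sigma_f)$ while every end has \emph{infinite $f$-volume}, those constants must vanish, i.e. $u$ is compactly supported. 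The infinite $f$-volume of the ends (itself a consequence of \eqref{eq:weightedSobolev}, applied to cut-offs supported in balls going out along an end) is the one ingredient your sketch does not name, and it is exactly what replaces the ``parabolicity structure'' you gesture at; with it your step (iv) closes, so there is no genuine gap, only this point to make explicit.
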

\begin{proof}
By \cite{Bue}, we have the following decomposition of the space $L^{2,f}(\Lambda^1T^*\Sigma)$ of square integrable one forms on $\Sigma$ with respect to the weighted measure:
\[
L^{2,f}(\Lambda^1T^*\Sigma)=A\oplus B_f \oplus \mathcal{H}^{1}_{f}(\Sigma),
\]
where
\[
\begin{cases}
A=\overline{\{dg: g\in C^\infty_c(\Sigma)\}},\\
B_f=\overline{\{\delta_f\eta: \eta\in C^{\infty}_c(\Lambda^2T^*\Sigma)\}},
\end{cases}
\]
the closure been taken with respect to the weighted $L^2$ norm.
Denote by $Z^1_f(\Sigma)$ the space of closed $L^{2,f}$ $1$-forms, that is
\[
Z^1_f(\Sigma)=\{\omega \in L^{2,f}(\Lambda^1T^*\Sigma): d\omega=0\}.
\]
It is understood that the equation $d\omega=0$ holds weakly on $\Sigma_f$, i.e.
\[
\langle \omega, \delta_f \beta \rangle=0 \ \forall \beta\in C^{\infty}_c(\Lambda^2T^*\Sigma).
\]
In particular,
\[
Z^1_f(\Sigma)=A\oplus \mathcal{H}^{1}_{f}(\Sigma),
\]
and hence the first space of reduced $L^{2,f}$ cohomology satisfies
\[
H^1_f(\Sigma)\doteq\frac{Z^1_f(\Sigma)}{A}\simeq \mathcal{H}^{1}_{f}(\Sigma).
\]
Now observe that there is a natural map
\[
i:H^1_c(\Sigma)\rightarrow H^1_f(\Sigma).
\]
We claim that this map is injective. This is equivalent to prove that if $\alpha$ is a closed $1$-form which is zero with respect to the $L^{2,f}$ cohomology, then there exists $u\in C_{c}^{\infty}(\Sigma)$ such that $\alpha=du$. First note that if $\alpha$ is zero with respect to the $L^{2,f}$ cohomology then there exists a sequence $u_j\in C^{\infty}_c(\Sigma)$ such that
\[
\lim_{j\rightarrow+\infty}\|\alpha-d u_j\|=0. 
\]
The validity of the weighted $L^2$-Sobolev inequality implies that $\{u_j\}$ is a Cauchy sequence in $L^\frac{2}{1-\alpha}(\Sigma_f)$ and hence it will converge to a function $u$ in this space. Moreover an adaptation to the weighted setting of \cite[Lemma 1.11]{Carron_LectNotes} yields that  $u$ must satisfy $du=\alpha$. Since $\alpha$ has compact support, $u$ must be locally constant at infinity. Since $u\in L^\frac{2}{1-\alpha}(\Sigma_f)$ and each end of $\Sigma$ has infinite $f$-volume, this implies that $u$ must have compact support, proving the claim.

To conclude note that the injectivity of the map $i$ implies that
\[
\mathrm{dim}(H_c^1(\Sigma))=\mathrm{rank}(i)\leq \mathrm{dim}(\mathcal{H}^{1}_{f}(\Sigma)),
\]
proving the proposition.
\end{proof}

Some remarks are in order
\begin{remark}
\rm{If $\Sigma$ is topologically tamed, i.e. it is diffeomorphic to the interior of a compact manifold $\bar{\Sigma}$ with compact boundary $\partial \bar{\Sigma}$, then $H_c^1(\Sigma)$ is isomorphic to the relative cohomology group of $\bar{\Sigma}$:
\[
H^1(\bar{\Sigma},\partial \bar{\Sigma}):=\frac{\{\alpha \in C^\infty(\Lambda^1T^*\bar{\Sigma}), d\alpha=0, \iota^*\alpha=0\}}{\{d\beta,\beta\in C^\infty(\bar{\Sigma}), \iota^*\beta=0\}},
\]
where $\iota:\partial \bar{\Sigma}\rightarrow \bar{\Sigma}$ is the inclusion map.
Moreover, setting $K=\partial \bar{\Sigma}$, as a consequence of the exactness of the sequence:
\[
0\longrightarrow H^0(\bar{\Sigma})\longrightarrow H^0(K)\longrightarrow H^1(\bar{\Sigma}, K)\longrightarrow H^1(\bar{\Sigma})\longrightarrow H^1(K)\longrightarrow\cdots
\]
we also deduce that
\[
\mathrm{dim}(H_c^1(\Sigma))\geq b_1(\bar{\Sigma})+\sharp\{\textrm{connected components of\,} K\}-1-b_1(K).
\]
Finally, we also note that if $\Sigma$ is of finite topological type, i.e it is diffeomorphic to $\bar{\Sigma}\backslash K$ with $K=\{p_1,\dots,p_r\}$, then
\[
\mathrm{dim}(H_c^1(\Sigma))= b_1(\bar{\Sigma})+r-1.
\]}
\end{remark}

Since, as a consequence of Appendix \ref{AppA} and arguments  in \cite{ImperaRimoldi_Transl}, on every translator with $m\geq 3$ and which is contained in a upper halfspace we have the validity of \eqref{eq:weightedSobolev} with the choice $\alpha=2/m$, it is straightforward, keeping in mind the previous remarks, to obtain the following
\begin{lemma}\label{lemmacohom}
Let $x:\Sigma^{m\geq 3}\rightarrow\real{m+1}$ be a translator such that $x(\Sigma)$ is contained in a upper halfspace. Then
\[
\mathrm{dim}(\mathcal{H}^{1}_{f}(\Sigma))\geq \mathrm{dim}(H_c^1(\Sigma)).
\]
In particular, if $\Sigma$ has finite topological type then 
\[
\mathrm{dim}(\mathcal{H}^{1}_{f}(\Sigma))\geq b_1(\bar{\Sigma})+r-1.
\]
\end{lemma}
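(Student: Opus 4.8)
The plan is to obtain Lemma~\ref{lemmacohom} by feeding the appropriate analytic input into the preceding proposition, which compares $\mathrm{dim}(\mathcal{H}^{1}_{f}(\Sigma))$ with $\mathrm{dim}(H^{1}_{c}(\Sigma))$ whenever the weighted $L^{2}$-Sobolev inequality \eqref{eq:weightedSobolev} holds, and then reading off the ``in particular'' clause from the cohomological computation recorded in the preceding remark. The first thing to observe is that the exponent $\alpha=2/m$ is admissible in \eqref{eq:weightedSobolev} precisely because $m\geq 3$: then $0\leq 2/m<1$, and $\frac{2}{1-\alpha}=\frac{2m}{m-2}$ is exactly the critical Sobolev exponent of an $m$-dimensional manifold, so that \eqref{eq:weightedSobolev} is the natural weighted analogue of the Euclidean Sobolev inequality in dimension $m$.

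The substantive point is to verify that a translator $x:\Sigma^{m\geq 3}\to\real{m+1}$ with $x(\Sigma)$ contained in an upper halfspace does satisfy \eqref{eq:weightedSobolev} with $\alpha=2/m$. Here one uses that a translator is an $f$-minimal hypersurface with $f$ the restriction of the linear function $-\<x,\bar{E}_{m+1}\>$, so that on an upper halfspace the weight $e^{-f}=e^{\<x,\bar{E}_{m+1}\>}$ is bounded below by a positive constant; combined with the boundedness of the (ambient) weighted mean curvature of $\Sigma$, this yields a weighted Michael--Simon type inequality and hence \eqref{eq:weightedSobolev}. This is exactly the content of the analysis carried out in Appendix~\ref{AppA} together with the arguments of \cite{ImperaRimoldi_Transl}, which also supply the companion fact --- needed inside the proof of the preceding proposition to force the potential $u$ to have compact support --- that every end of such a $\Sigma$ has infinite $f$-volume, equivalently is non-$f$-parabolic. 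I would simply invoke these inputs rather than reprove them; indeed this is where essentially all the non-formal work sits.

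Granting \eqref{eq:weightedSobolev}, the inequality $\mathrm{dim}(\mathcal{H}^{1}_{f}(\Sigma))\geq\mathrm{dim}(H^{1}_{c}(\Sigma))$ is then an immediate application of the preceding proposition. For the ``in particular'' clause, if $\Sigma$ is connected and of finite topological type, say diffeomorphic to $\bar{\Sigma}\setminus\{p_{1},\dots,p_{r}\}$ with $\bar{\Sigma}$ a closed manifold, I would quote the last identity of the preceding remark, $\mathrm{dim}(H^{1}_{c}(\Sigma))=b_{1}(\bar{\Sigma})+r-1$ --- which follows from the long exact sequence of the pair $(\bar{\Sigma},\{p_{1},\dots,p_{r}\})$, the diagonal map $H^{0}(\bar{\Sigma})\to H^{0}(\{p_{i}\})$ being injective with cokernel of dimension $r-1$ --- and chain it with the previous inequality. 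The main obstacle, as noted, is the Sobolev inequality itself; everything else is bookkeeping.
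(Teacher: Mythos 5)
Your proposal is correct and follows essentially the same route as the paper: invoke the weighted $L^{2}$-Sobolev inequality \eqref{eq:weightedSobolev} with $\alpha=2/m$ (valid since $m\geq 3$ and $x(\Sigma)$ lies in an upper halfspace, by Appendix~\ref{AppA} and \cite{ImperaRimoldi_Transl}), apply the preceding proposition to get $\mathrm{dim}(\mathcal{H}^{1}_{f}(\Sigma))\geq\mathrm{dim}(H^{1}_{c}(\Sigma))$, and conclude the finite-topological-type case from the identity $\mathrm{dim}(H^{1}_{c}(\Sigma))=b_{1}(\bar{\Sigma})+r-1$ in the preceding remark. One small caveat: your parenthetical description of \emph{how} the Sobolev inequality arises (weight bounded below plus bounded weighted mean curvature giving a weighted Michael--Simon inequality) is not quite how Appendix~\ref{AppA} proceeds --- there it comes from a conformal change making $\Sigma$ minimal in a complete Cartan--Hadamard extension of the halfspace, Hoffman--Spruck's $L^{1}$-Sobolev inequality, and the standard passage from $L^{1}$ to $L^{2}$ --- but since you defer to those sources rather than reprove the inequality, this does not affect the argument.
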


From Lemma \ref{lemmacohom} and Theorem \ref{ThIndPlusNullEst} we obtain the validity of Theorem \ref{ThFiniteTop} in the Introduction.

\begin{remark}
\rm{We end up this section noting that if $x :\Sigma^m\rightarrow\real{m+1}$ is a $f$-stable translator with $|A|\in L^{\infty}(\Sigma)$ then $\mathrm{Null}_{f}(\Sigma)=1$. Indeed, letting $h\in C^2(\Sigma)$ be a positive solution of the stability equation
\[
\Delta_f h - |A|^2h = 0,
\]
and letting $u\in L^2(\Sigma_f)$ be a solution of 
\[
\Delta_f u-|A|^2u=0,
\]
then $u=C\omega$ for some non-zero constant $C$. Once noted that, since $|A|\in L^{\infty}(\Sigma)$,  $u\in W^{1,2}(\Sigma_f)$ the claim can be obtained proceeding exactly as in \cite[Lemma 3.2]{ImperaRimoldi_Transl} replacing $H$ by $u$. In particular, by Theorem \ref{ThIndPlusNullEst}, this yields (yet another) alternative proof of the fact that $f$-stable $2$-dimensional translators with $|A|\in L^{\infty}(\Sigma)$ have at most genus one.}
\end{remark}

\appendix
\section{About the validity of a  weighted Sobolev inequality on translators}\label{AppA}

There is a gap in the proof of Lemma 4.1 and Lemma 4.2 in \cite{ImperaRimoldi_Transl}. Here we provide a correct complete proof of Lemma 4.2 in \cite{ImperaRimoldi_Transl}.

\begin{lemma}[Lemma 4.2 in \cite{ImperaRimoldi_Transl}]\label{4.2}
Let $x:\Sigma^{m}\to\mathbb{R}^{m+1}$ be a translator contained in the upper halfspace $\Pi_{\bar{V},a}=\left\{p\in\mathbb{R}^{m+1}:\,\<p,\bar{V}\>\geq a\right\}$, for some $a\in\mathbb{R}$. Let $h$ be a non-negative compactly supported $C^{1}$ function on $\Sigma$. Then
\[
\ \left[\int_{\Sigma}h^{\frac{m}{m-1}}e^{-f}d\mathrm{vol}_{\Sigma}\right]^{\frac{m-1}{m}}\leq D\int_{\Sigma}|\nabla h|e^{-f}d\mathrm{vol}_{\Sigma},
\]
for some constant $D$ depending on $a$ and $m$.
\end{lemma}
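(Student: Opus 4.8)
The plan is to combine the Euclidean Michael--Simon Sobolev inequality with a \emph{linear} weighted isoperimetric inequality that holds on every translator (the halfspace hypothesis entering only through a trivial estimate). Recall that for an immersed $\Sigma^m\hookrightarrow\mathbb{R}^{m+1}$ and $0\le u\in C^1_c(\Sigma)$,
\[
\Big(\int_\Sigma u^{\frac{m}{m-1}}\,d\mathrm{vol}_\Sigma\Big)^{\frac{m-1}{m}}\le c(m)\int_\Sigma\big(|\nabla u|+|\mathbf{H}|\,u\big)\,d\mathrm{vol}_\Sigma .
\]
First I would apply this not to $h$ but to the twisted function $u:=h\,e^{-\frac{m-1}{m}f}$, chosen so that $u^{\frac{m}{m-1}}=h^{\frac{m}{m-1}}e^{-f}$. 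Since $\nabla u=e^{-\frac{m-1}{m}f}\big(\nabla h-\tfrac{m-1}{m}h\nabla f\big)$, and since on a translator $|\nabla f|=|E_{m+1}|\le|\bar E_{m+1}|=1$ and $|\mathbf{H}|=|\langle \bar E_{m+1},\nu\rangle|\le1$, while on the halfspace $e^{-\frac{m-1}{m}f}=e^{-f}e^{\frac1m f}\le e^{-a/m}e^{-f}$ (because $f\le -a$ there), the inequality above applied to $u$ gives
\[
\Big(\int_\Sigma h^{\frac{m}{m-1}}e^{-f}\,d\mathrm{vol}_\Sigma\Big)^{\frac{m-1}{m}}\le c(m)e^{-a/m}\Big(\int_\Sigma|\nabla h|\,e^{-f}\,d\mathrm{vol}_\Sigma+\tfrac{2m-1}{m}\int_\Sigma h\,e^{-f}\,d\mathrm{vol}_\Sigma\Big).
\]
This reduces the statement to absorbing the zeroth-order term $\int_\Sigma h\,e^{-f}$ into $\int_\Sigma|\nabla h|\,e^{-f}$, i.e.\ to a weighted $L^1$-Poincar\'e inequality with a universal constant.

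For that step I would use that, on every translator, $\mathrm{div}_f(E_{m+1})=1$, which is equivalent to $\Delta_f\langle x,\bar E_{m+1}\rangle=-1$, i.e.\ to \eqref{4} with $i=m+1$ (recalling $\nabla\langle x,\bar E_{m+1}\rangle=E_{m+1}$ from the proof of that identity), together with $|E_{m+1}|\le1$. For any $\Omega\Subset\Sigma$ with smooth boundary one has $\mathrm{div}(e^{-f}E_{m+1})=e^{-f}\mathrm{div}_f(E_{m+1})=e^{-f}$, so the divergence theorem yields
\[
\mathrm{vol}_f(\Omega)=\int_\Omega e^{-f}\,d\mathrm{vol}_\Sigma=\int_{\partial\Omega}e^{-f}\langle E_{m+1},\nu_{\partial\Omega}\rangle\,d\mathcal{H}^{m-1}\le\int_{\partial\Omega}e^{-f}\,d\mathcal{H}^{m-1}=\mathrm{vol}_f(\partial\Omega),
\]
a linear weighted isoperimetric inequality with constant $1$, needing no curvature or volume hypothesis. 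Applying this to the superlevel sets $\{h>t\}$ of a fixed $0\le h\in C^1_c(\Sigma)$ and using the coarea formula,
\[
\int_\Sigma h\,e^{-f}\,d\mathrm{vol}_\Sigma=\int_0^\infty\mathrm{vol}_f(\{h>t\})\,dt\le\int_0^\infty\mathrm{vol}_f(\partial\{h>t\})\,dt=\int_\Sigma|\nabla h|\,e^{-f}\,d\mathrm{vol}_\Sigma .
\]
Inserting this into the displayed inequality above gives the assertion with $D=c(m)\,e^{-a/m}\,\tfrac{3m-1}{m}$.

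The main obstacle is making the coarea step rigorous: one has to guarantee that the divergence theorem applies to $\{h>t\}$ for almost every $t$, which is handled by the standard argument of restricting to regular values of $h$ (so that $\{h>t\}$ is a relatively compact open set with smooth boundary $\{h=t\}$), invoking Sard's theorem, and using the coarea identity $\int_\Sigma|\nabla h|\,e^{-f}\,d\mathrm{vol}_\Sigma=\int_0^\infty\int_{\{h=t\}}e^{-f}\,d\mathcal{H}^{m-1}\,dt$; the critical values contribute a null set. Everything else is routine — in particular the twist in Michael--Simon and the bounds $|\mathbf{H}|,|E_{m+1}|\le1$ — and it is worth noting that the upper halfspace hypothesis is used \emph{only} in the elementary inequality $e^{\frac1m f}\le e^{-a/m}$ in the twist, whereas the linear weighted isoperimetric inequality, and hence the weighted $L^1$-Poincar\'e inequality, holds on an arbitrary translator.
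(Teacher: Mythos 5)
Your argument is correct, and it follows a genuinely different route from the paper's. The paper proves the lemma by the conformal change $e^{-2f/m}\langle\,,\,\rangle$, under which the translator becomes a minimal hypersurface of the deformed halfspace; it then extends that halfspace to a complete, simply connected ambient manifold with nonpositive sectional curvature (so that Cartan--Hadamard gives infinite injectivity radius) and invokes the Hoffman--Spruck Sobolev inequality, finally converting volume forms and gradients back. There the halfspace hypothesis is used both to build the curvature-controlled extension and in the final gradient conversion. You instead stay entirely in the Euclidean picture: Michael--Simon applied to the twisted function $u=h\,e^{-\frac{m-1}{m}f}$, together with the elementary bounds $|\mathbf{H}|\le 1$, $|\nabla f|=|E_{m+1}|\le 1$ and $e^{f/m}\le e^{-a/m}$, reduces the statement to the weighted $L^1$-Poincar\'e inequality $\int_\Sigma h\,e^{-f}d\mathrm{vol}_\Sigma\le\int_\Sigma|\nabla h|\,e^{-f}d\mathrm{vol}_\Sigma$, which you obtain from the identity $\mathrm{div}_f E_{m+1}=1$ (valid on every translator, consistent with \eqref{4} and the paper's convention $\Delta=-\mathrm{div}\nabla$). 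This buys an explicit constant $D=c(m)e^{-a/m}\tfrac{3m-1}{m}$, isolates the role of the halfspace hypothesis in a single pointwise estimate, and shows as a byproduct that the linear weighted isoperimetric/Poincar\'e inequality needs no halfspace condition at all; the paper's route, by contrast, fits its general weighted Li--Tam/Hoffman--Spruck framework but is less elementary and its constant is implicit. Two small repairs to your last step: Sard's theorem is not available for merely $C^1$ functions on $\Sigma^m$ with $m\ge 2$, so the level-set argument as written should first be run for $h\in C^\infty_c(\Sigma)$ and then extended to $C^1_c$ by approximation; better yet, you can dispense with level sets and coarea altogether, since integrating the identity $\mathrm{div}\bigl(h\,e^{-f}E_{m+1}\bigr)=e^{-f}\bigl(\langle\nabla h,E_{m+1}\rangle+h\bigr)$ over $\Sigma$ (the vector field is $C^1$ with compact support) gives $\int_\Sigma h\,e^{-f}d\mathrm{vol}_\Sigma=-\int_\Sigma\langle\nabla h,E_{m+1}\rangle e^{-f}d\mathrm{vol}_\Sigma\le\int_\Sigma|\nabla h|\,e^{-f}d\mathrm{vol}_\Sigma$ directly for every nonnegative $h\in C^1_c(\Sigma)$.
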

\begin{proof}
Assume that $x:\Sigma^{m}\to\mathbb{R}^{m+1}$ is a translator for the mean curvature flow contained in the upper halfspace $\Pi_{\overline{V},a}$ and let $f(p)=-\<p, \overline{V}\>$. Consider on $\mathbb{R}^{m+1}$ the conformal metric $\widetilde{\<\,,\,\>}\doteq e^{-\frac{2f}{m}}\<\,,\,\>$. Then $(\Sigma, x^{*}\<\,,\,\>)$ is a translator in $(\Pi_{v,a},\<\,,\,\>)$ if and only if $\tilde{\Sigma}\doteq(\Sigma, \tilde{g}\doteq x^{*}\widetilde{\<\,,\,\>})$ is minimal in $(\Pi_{\bar{V},a},\widetilde{\<\,,\,\>})$. Without loss of generality we can assume that $x(\Sigma)$ does not intersect the boundary of $\Pi_{\bar{V},a}$.

Using the expression for the curvature tensor of a Riemannian manifold under a conformal change one proves that the curvature tensor $\tilde{R}$ of $(\Pi_{\bar{V},a},\widetilde{\<\,,\,\>})$ satisfies 
\[
\tilde{R}_{ijij}=\frac{e^{-\frac{2f}{m}}}{m^2}\left(\<\bar{\nabla} f, e_{i}\>^{2}+\<\bar{\nabla}f, e_{j}\>^2-|\bar{\nabla}f|^2\right)\leq 0,
\]
where $\left\{\partial_{j}\doteq\frac{\partial}{\partial x^{j}}\right\}_{j=1}^{m+1}$ denotes the standard orthonormal basis in $\mathbb{R}^{m+1}$.

Since $\Pi_{\bar{V},a}$ is a manifold with boundary we have to pay special attention in applying directly Theorem 2.1 in \cite{HS}. However, it is possible to extend $(\Pi_{\bar{V},a}, \widetilde{\langle\,,\,\rangle})$ to a complete simply connected manifold without boundary preserving the curvature bound. Indeed, let $\psi:\mathbb{R}\to\mathbb{R}$ be a smooth convex function such that $\psi(t)=\frac{2}{m}t$ for $t\geq a$, and $\psi(t)>1$ for $t\leq a-C$ for some constant $C>0$, and define $\overline{\langle\,,\,\rangle}\doteq e^{\psi(\langle p,\bar{V}\rangle)}\langle\,,\,\rangle$. Then  $\bar{N} =\left(\mathbb{R}^{m+1}, \overline{\langle\,,\,\rangle}\right)$ has the desired properties. 
Indeed, $\bar{N}$ is simply connected since it is topologically the Euclidean space. By the definition of $\psi$, $\bar{N}$ is a Riemannian extension of $(\Pi_{\bar{V},a}, \widetilde{\langle\,,\,\rangle})$ and, since $\Sigma$ is strictly contained in $\Pi_{\bar{V}, a}$, $\tilde{\Sigma}$ can be viewed as a minimal hypersurface in $\bar{N}$.

Furthermore, in order to prove that $\bar{N}$ is complete, it is sufficient to prove that divergent curves in $\mathbb{R}^{m+1}$ have infinite length with respect to the metric $\overline{\langle\,,\,\rangle}$. In this regard note first that,by the definition of $\psi$, for any $t\in \mathbb{R}$,
\[
\ e^{\psi(t)}\geq \min\left\{e,\,e^{\min_{\left[a-C,a\right]}\psi},\,e^{\frac{2}{m}a}\right\}\doteq \lambda>0.
\]
Hence, if $\gamma:I\to\mathbb{R}^{m+1}$ is any curve, then its length satisfies
\begin{align*}
\bar{l}(\gamma)=&\int_{I}e^{\frac{\psi}{2}\left(\langle\gamma, \bar{V}\rangle\right)}\langle\dot{\gamma},\dot{\gamma}\rangle^{\frac{1}{2}}dt\\
\geq& \sqrt{\lambda}\int_{I}\langle\dot{\gamma},\dot{\gamma}\rangle^{\frac{1}{2}}dt=\sqrt{\lambda}\,l_{Eucl}(\gamma).
\end{align*}
In particular if $\gamma$ is a divergent curve, since we know by the completeness of the Euclidean space endowed with the standard metric that $l_{Eucl}(\gamma)$ is infinite, we conlcude that $\bar{l}(\gamma)$ is infinite as well.

Moreover, for every $i\neq j$, we have that
\begin{equation*}\label{ConfChange}
\ \bar{R}_{ijij}=e^{\psi(-f)}\left[-\langle\,,\,\rangle\mathbin{\bigcirc\mspace{-15mu}\wedge\mspace{3mu}}\left(\mathrm{Hess}(\frac{\psi}{2}(-f))-d(\frac{\psi}{2}(-f))\otimes d(\frac{\psi}{2}(-f))+\frac{1}{2}\left|d(\frac{\psi}{2}(-f))\right|^2\langle\,,\,\rangle\right)\right]_{ijij}.
\end{equation*} 
Using the expression of $f$,
\begin{align*}
\alpha\doteq&\mathrm{Hess}(\frac{\psi}{2}(-f))-d(\frac{\psi}{2}(-f))\otimes d(\frac{\psi}{2}(-f))+\frac{1}{2}\left|d(\frac{\psi}{2}(-f))\right|^2\langle\,,\,\rangle\\
=&\left(\frac{1}{2}\psi^{\prime\prime}(\langle p,\bar{E}_{m+1}\rangle)-\frac{1}{4}(\psi^{\prime}(\langle p,\bar{E}_{m+1}\rangle))^2\right)dx^{m+1}\otimes dx^{m+1}\\
&+\frac{1}{8}(\psi^{\prime}(\langle p,\bar{E}_{m+1}\rangle))^2\langle\,,\,\rangle.
\end{align*}
Hence
\begin{align*}
\left(\langle\,,\,\rangle\mathbin{\bigcirc\mspace{-15mu}\wedge\mspace{3mu}}\alpha\right)_{ijij}=&\alpha_{jj}+\alpha_{ii}-2\delta_{ij}\alpha_{ij}\\
=&\left(\frac{1}{2}\psi^{\prime\prime}(\langle p,\bar{E}_{m+1}\rangle)-\frac{1}{4}\left(\psi^{\prime}(\langle p,\bar{E}_{m+1}\rangle)\right)^{2}\right)\\&\left[\left(dx^{m+1}\left(\partial_{j}\right)\right)^2+\left(dx^{m+1}\left(\partial_{i}\right)\right)^{2}-2\delta_{ij}dx^{m+1}\left(\partial_{i}\right)dx^{m+1}\left(\partial_{j}\right)\right]\\
&+\frac{1}{8}\left(\psi^{\prime}(\langle p,\bar{E}_{m+1}\rangle)\right)^{2}\left(2-2\delta_{ij}.\right)
\end{align*}
Hence $\bar{R}_{ijij}\leq 0$ for any $i\neq j$. 
\medskip

Note that, since $\bar{N}$ is complete, simply connected and with non-positive sectional curvature, as a consequence of the Cartan-Hadamard theorem, its injectivity radius is infinite.

By \cite{HS} we can now get on the minimal hypersurface $\tilde{\Sigma}$ of $\bar{N}$ the validity of the $L^{1}$- Sobolev inequality
\begin{equation}\label{Sob1}
\left(\int_{\Sigma}h^{\frac{m}{m-1}}d\mathrm{vol}_{\tilde{\Sigma}}\right)^{\frac{m-1}{m}}\leq C\int_{\Sigma}\widetilde{|\tilde{\nabla}h|}d\mathrm{vol}_{\tilde{\Sigma}},
\end{equation}
for every non-negative $h\in C_{c}^{1}(\Sigma)$.

The desired conclusion follows immediately from \eqref{Sob1} keeping in mind that under a conformal change of the metric, the volume form and the norm of the gradient of a given function satisfy
\begin{eqnarray*}
d\mathrm{vol}_{\tilde{\Sigma}}=e^{-f}d\mathrm{vol}_{\Sigma},\\
\widetilde{|\tilde{\nabla}h|}=e^{\frac{f}{m}}|\nabla h|\leq e^{\frac{a}{m}}|\nabla h|.
\end{eqnarray*}
\end{proof}

\begin{acknowledgement*}
The authors are deeply grateful to Alessandro Savo for his interest in this work and a number of enlightening discussions. The first author is partially supported by INdAM-GNSAGA. The second author acknowledge partial support by INdAM-GNAMPA.
\end{acknowledgement*}

\end{document}